\documentclass[reqno,12pt]{amsart}
\usepackage[left=1.2in, right = 1.2in, top=1in]{geometry}
\usepackage{amsmath, amssymb}
\usepackage{amsthm}
\usepackage{tikz}
\usepackage{amstext}
\usepackage{pifont}
\usepackage{appendix}
\usepackage{verbatim}
\usepackage{cases}
\usepackage{CJKutf8}
\newtheorem{theorem}{Theorem}[section]

\newtheorem{lemma}{Lemma}[section]

\newtheorem{claim}{claim}

\usepackage{fancyhdr}

\title{Liouville theorem for quasilinear elliptic equations %involving  gradient term
in $\mathbb R^N$}

\author{Wangzhe Wu}
\address{Department of Mathematics \\University of Science and Technology of China, Hefei, China}
\email{wuwz18@mail.ustc.edu.cn}

\author{Qiqi Zhang}
\address{College of Mathematics and Information Science \\ Nanchang Hangkong University, Nanchang, China}
\email{qiqizmath@126.com}

%\author{XiNan Ma}
%\address{Department of Mathematics \\University of Science and Technology of China, Hefei, China}
%\email{xinan@ustc.edu.cn}
%\author{Wangzhe Wu}
%\address{Department of Mathematics \\University of Science and Technology of China, Hefei, China}
%\email{wuwz18@mail.ustc.edu.cn}
%
%\author{Qiqi Zhang}
%\address{College of Mathematics and Information Science \\ Nanchang Hangkong University, Nanchang, China}
%\email{71131@nchu.edu.cn}

\begin{document}
	\begin{CJK}{UTF8}{gbsn}
	\pagestyle{fancy}

\fancyhead{}
%\fancyhead[CO]{Liouville theorem for elliptic equations}

\fancyhead[CE]{\leftmark}

	\begin{abstract}
		We prove Liouville theorem for the equation $\Delta_m v +  v^p + M |\nabla v|^{q}=  0$ in  a domain $\Omega\subset\mathbb R^n$, with $M\in \mathbb{R}$ in the critical and subcritical case. As a natural extension of our recent work \cite{MWZ}, the proof is based on an integral identity and Young's inequality.
	\end{abstract}

	\maketitle

	\section{Introduction}

\noindent In this paper, we are concerned with a Liouville type theorem for the global solutions  of the following equations in $\mathbb R^n$:
	\begin{equation}\label{main equation}
		\Delta_m v +  v^p + M |\nabla v|^{q}=0, \quad \mathrm{ in} \quad\Omega,
	\end{equation}
where $q = \frac{mp}{p + 1}$,  $\Delta_m v = \mathrm{div}(|\nabla v| ^{ m-2}\nabla v )$, $p$ and $q$ are exponents larger than 1, $M\in \mathbb{R}$.  This kind of equations are widely concerned and studied depending on the sign and value of $M$.
The most important motivation of the present study is to extend the results obtained for the semilinear equation
\begin{equation}\label{laplacian}
\Delta v +  v^p + M |\nabla v|^{q}=0,\quad \mathrm{ in} \quad\Omega,
\end{equation}
in \cite{VERON,VERON0}. By using a delicate combination of refined Bernstein techniques and Keller-Osserman estimate, they obtained a series of a priori estimates for any positive solution of (\ref{laplacian}) in arbitrary domain of $\mathbb R^n$.
\begin{theorem}[Theorem E in \cite{VERON}] \label{Veron0}
		Let $n \geq 3$, $1 < p < \frac{n + 2}{n - 2} , q = \frac{2p}{p + 1}$. Then there exists $\epsilon_0 > 0$ depending on $n$ and $p$ such that for any $|M| \leq \epsilon_0$,
		there exists no nontrivial nonnegative solution of \eqref{laplacian} in $\mathbb R^n$.
	\end{theorem}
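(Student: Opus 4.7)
The plan is to adapt Bidaut-Véron's integral Bernstein method for the pure equation $\Delta v + v^p = 0$ and to treat the gradient term $M|\nabla v|^q$ as a perturbation absorbable by Young's inequality when $|M|$ is small. The key structural feature is the scaling relation $q(p+1) = 2p$, which ensures that the extra integrals produced by the $M$-term share the same homogeneity as those coming from $v^p$, so that a single dual pair of Young exponents absorbs them all uniformly.

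Concretely, I would first differentiate (\ref{laplacian}) and contract with $\nabla v$ to obtain the pointwise Bochner-type identity
\[
\tfrac{1}{2}\Delta z \;=\; |D^2 v|^2 \;-\; p v^{p-1} z \;-\; M q \,|\nabla v|^{q-2} D^2 v(\nabla v, \nabla v),\qquad z := |\nabla v|^2,
\]
and then use Newton's inequality $|D^2 v|^2 \ge (\Delta v)^2 / n = (v^p + M|\nabla v|^q)^2 / n$ together with (\ref{laplacian}) itself to turn this into a pointwise lower bound for $\Delta z$ in terms of $v^{2p}$, $v^{p-1} z$, and $|\nabla v|^{2q}$. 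I would test this inequality against a weight $v^{a} z^{b} \varphi_R^\beta$, where $\varphi_R \in C_c^\infty(B_{2R})$ is a standard cut-off with $\varphi_R \equiv 1$ on $B_R$ and $|\nabla \varphi_R| \le C/R$, and integrate by parts. Every resulting integral takes the form $\int v^{\sigma_i}|\nabla v|^{\tau_i}\varphi_R^{\beta_i}|\nabla\varphi_R|^{\delta_i}\,dx$, and applying Young's inequality with the dual pair $(2/q,\,p+1)$ to each factor carrying $|\nabla v|^q$ converts every $M$-contribution into $\varepsilon\,(\text{leading-order good term}) + C_\varepsilon |M|\,(\text{same leading term})$. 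For $|M| \le \epsilon_0$ with $\epsilon_0$ small, the perturbation is therefore absorbed into the pure-equation quantities on the good side.

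Once the $M$-terms are absorbed, the remainder of the argument is exactly the Bidaut-Véron integral Bernstein argument for $\Delta v + v^p = 0$: choosing $(a,b)$ in an admissible open range one derives an estimate of the form
\[
\int_{B_R}\!\bigl(v^{p+a} + v^{a-1}|\nabla v|^2\bigr)\,dx \;\le\; C\, R^{n-\gamma(n,p,a)},
\]
where $\gamma(n,p,a)$ is strictly positive in a nonempty open range of $a$ precisely when $p < (n+2)/(n-2)$. Sending $R \to \infty$ forces the left-hand side to be $0$, and hence $v \equiv 0$. The main obstacle is the exponent bookkeeping in the previous step: one must choose $(a,b)$ so as to simultaneously (i) make the Bidaut-Véron machinery produce a strictly positive coefficient on the dominant good terms after invoking Newton's inequality, (ii) let Young's inequality with exponents $(2/q, p+1)$ absorb every $M$-contribution, and (iii) render the cut-off remainder $R^{n-\gamma}$ decaying. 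These three constraints together enforce the subcritical range $p < (n+2)/(n-2)$ and determine the smallness threshold $\epsilon_0$.
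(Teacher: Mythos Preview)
This theorem is \emph{not} proved in the present paper: it is quoted verbatim from \cite{VERON} as background. The paper's own contributions are Theorems~\ref{thm1} and~\ref{thm2} (Section~5), which generalize the statement to the $m$-Laplacian; setting $m=2$ there recovers (and for $M>0$ slightly extends to the critical exponent) the conclusion of Theorem~\ref{Veron0}. So there is no ``paper's own proof'' to compare against directly, only the proof of the $m$-Laplacian generalization.

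That said, your sketch is a faithful outline of the original Bernstein-technique argument in \cite{VERON}, and it is \emph{different} from what this paper does. The paper does not start from the Bochner identity for $z=|\nabla v|^2$ and test against $v^a z^b\varphi^\beta$. Instead it multiplies the equation by $v^\alpha\Delta_m v$, expands $(\Delta_m v)^2$ via the traceless tensor $E^i_j = X^i_j - \tfrac{1}{n}\Delta_m v\,\delta_{ij}$, and arrives at a single integral identity \eqref{sec1_final} containing all the relevant quantities. It then adds free multiples $T,U,P$ of three auxiliary identities (the equation tested against $v^{\alpha+p}$, $v^{\alpha-1}|\nabla v|^m$, and $Mv^\alpha|\nabla v|^q$) and reduces the Liouville property to the algebraic system \eqref{sec2_condition1}--\eqref{sec2_condition6} in the parameters $(\alpha,T,U,P)$. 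Young's inequality enters only at the very end, to repair whichever of these sign conditions fails, and that is where the smallness of $|M|$ is spent.

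What each approach buys: your Bernstein route is more geometric and stays close to the classical $|D^2v|^2\ge(\Delta v)^2/n$ mechanism, but the bookkeeping of admissible $(a,b)$ becomes delicate for general $m$. The paper's integral-identity route packages everything into explicit polynomial conditions on a handful of scalar parameters, which is what makes the extension to $\Delta_m$ tractable and allows the separate treatment of $M>0$ (including the critical $p$) and $M<0$.
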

It's obviously to see that the result in \cite{VERON} did not cover all the cases that $m = 2, 1 < p \leq \frac{n + 2}{n - 2}$ for $q = \frac{2p}{p + 1}$ when $M > 0$.  Thus, our recent work \cite{MWZ} came out, which complete the corresponding results of the range of $p$ with $1 < p \le \frac{n + 2}{n - 2}$ when $M > 0$, based on an integral identity and Young's inequality. For $m = 2, M < 0$, in \cite{VERON0} they gave some Liouville theorems for the equation \eqref{laplacian} when $v$ is radial. In addition, for general case $1 < m < p$, there are also parts of results varies as $M$ changes.
If $M>0$, Filippucci-Sun-Zheng \cite{F-Y-Y} derived various a priori estimate concerning $|\nabla v|$ for positive solutions of (\ref{main equation}) in the cases $q$ is less, greater or equal to ${mp\over p+1}$, and consequently obtain Liouville type theorems. For the case $q=\frac{mp}{p+1}$ and $M$ large enough, they got the following non-existence result in $\mathbb R^n$.
\begin{theorem}[Theorem 1.4  in \cite{F-Y-Y}]\label{m-lapalace Theorem}
		Let $\Omega\subset \mathbb R^n$, $p>\max\{m-1,1\}$ and $q=\frac{mp}{p+1}$. Then for any
 $$
 M>\frac{\sqrt{n}(p+1)}{(4p)^{p\over p+1}}\Big({p-1\over \sqrt{a}}\Big)^{\frac{p-1}{p+1}},
 $$
 where $0 < a \leq 1/n$, (\ref{main equation}) does not admit positive solutions in $\mathbb R^n$.
	\end{theorem}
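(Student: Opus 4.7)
The plan is to argue by contradiction: assume $v>0$ solves \eqref{main equation} in $\mathbb{R}^{n}$, then derive incompatible integral estimates unless $v\equiv 0$.

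First, I would choose a standard nonnegative cutoff $\phi$ supported in $B_{2R}$ with $\phi\equiv1$ on $B_{R}$ and $|\nabla\phi|\leq C/R$, and multiply \eqref{main equation} by $v^{s}\phi^{k}$ for parameters $s>0$ and $k\gg 1$ to be optimized later. Integration by parts produces the fundamental identity
\begin{equation*}
s\int v^{s-1}|\nabla v|^{m}\phi^{k}\,dx + k\int v^{s}|\nabla v|^{m-2}(\nabla v\cdot\nabla\phi)\phi^{k-1}\,dx = \int v^{p+s}\phi^{k}\,dx + M\int v^{s}|\nabla v|^{q}\phi^{k}\,dx.
\end{equation*}

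Next, exploiting the critical relation $q=mp/(p+1)$, I would apply Young's inequality with conjugate exponents $m/q=(p+1)/p$ and $p+1$ to the mixed term: for any $\delta>0$,
\begin{equation*}
v^{s}|\nabla v|^{q} \leq \frac{q\delta^{m/q}}{m}v^{s-1}|\nabla v|^{m} + \frac{1}{(p+1)\delta^{p+1}}v^{p+s}.
\end{equation*}
Inserting this into the identity, together with a pointwise Bernstein-type input of the form $(\Delta_{m}v)^{2}\leq n|\nabla v|^{2(m-2)}|\nabla^{2}v|^{2}$ refined by the parameter $a\in(0,1/n]$ via a Kato-type sharpening, yields a pair of integral inequalities comparing $\int v^{p+s}\phi^{k}$ and $\int v^{s-1}|\nabla v|^{m}\phi^{k}$ that point in opposite directions.

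Joint optimization over $\delta$, $s$, and $a$ then collapses these bounds into a single scalar inequality. The stated threshold on $M$ is precisely the value at which this joint optimum degenerates, i.e.\ beyond which the pair of inequalities forces $\int_{B_{R}}v^{p+s}\,dx=0$. The cutoff boundary term is handled by a further application of Young's inequality with a small absorbing weight, so that letting $R\to\infty$ gives $v\equiv 0$ in $\mathbb{R}^{n}$, contradicting positivity. The main obstacle I anticipate is this final simultaneous optimization: extracting the clean constant $\sqrt{n}(p+1)/(4p)^{p/(p+1)}$ times $((p-1)/\sqrt{a})^{(p-1)/(p+1)}$ requires tracking coefficients carefully through several nested Young estimates, and matching the refinement parameter $a\leq 1/n$ from the Bernstein step to the Young exponents so that no slack is lost. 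A secondary difficulty is absorbing the cutoff remainder without spoiling this sharp constant.
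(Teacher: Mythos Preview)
This theorem is not proved in the present paper at all: it is quoted verbatim as ``Theorem~1.4 in \cite{F-Y-Y}'' (Filippucci--Sun--Zheng) and serves only as background to motivate the authors' new results (Theorems~1.4 and~1.5, proved in Section~5). Consequently there is no ``paper's own proof'' to compare your proposal against.

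That said, a brief comment on your sketch. The overall architecture --- test the equation against $v^{s}\phi^{k}$, use $q=mp/(p+1)$ to balance $v^{s-1}|\nabla v|^{m}$ against $v^{p+s}$ via Young, and pass $R\to\infty$ --- is the standard route for results of this type and is indeed what \cite{F-Y-Y} does. However, your account of where the parameter $a\in(0,1/n]$ enters is too loose to be a proof. You invoke a ``Bernstein-type input $(\Delta_{m}v)^{2}\le n|\nabla v|^{2(m-2)}|\nabla^{2}v|^{2}$ refined by $a$ via a Kato-type sharpening'', but never use second derivatives anywhere else in the argument, so it is unclear how this pointwise inequality interfaces with the first-order integral identity you wrote down. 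In the original argument the constant $a$ arises from a specific tensor inequality (essentially controlling $(\Delta_{m}v)^{2}$ from below by a multiple of itself plus a gradient term, with $a$ the coefficient), and the optimization that produces the explicit threshold $\sqrt{n}(p+1)(4p)^{-p/(p+1)}((p-1)/\sqrt{a})^{(p-1)/(p+1)}$ is done at the pointwise level before integrating, not after. As written, your proposal has a gap between the Bernstein step and the integral estimate: you would need to explain concretely which quantity the Bernstein inequality bounds and how that bound feeds into the comparison of $\int v^{p+s}\phi^{k}$ and $\int v^{s-1}|\nabla v|^{m}\phi^{k}$.
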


If $M<0$, especially $M=-1$, which has been introduced by Chipot-Weissler \cite{Chipot-Weissler} in 1989 and obtained the existence of ground states when the parameter values $p$ and $q$ satisfy $1<q<{2p\over p+1}$, $p>1$. we recall that a ground states for (\ref{main equation}) or (\ref{laplacian}) is a non-negative non-trivial entire solution. It's known that radial ground states for (\ref{laplacian}) always exist for the supercritical parameter range $p > (n+ 2 ) / ( n - 2 )$ and may or may not exist when $p$ is critical or subcritical, depending on the value of $q$. Turning to the case of ground states for equation (\ref{main equation}) with $M=-1$, Serrin-Zou \cite{Serrin-Zou} showed that existence always holds when $p$ is supercritical, that is $p>\frac{(m-1)n+m}{n-m}$. For critical $p$, existence of radial ground states holds if and only if
\begin{align*}
\begin{cases}
0<q<(m-1)\frac{(m-1)n+m}{(m-1)n-m}, \text{ if } m >\frac{n}{n-1} ; \\
q>0, \text{ if } m <\frac{n}{n-1}.
\end{cases}
\end{align*}
For subcritical $p$, existence holds when $q<\frac{mp}{p+1}$.
Besides, when $M$ is allowed to be other negative constant, Filippucci-Sun-Zheng \cite{F-Y-Y} derived a non-existence result for supersolutions of (\ref{main equation}) in an exterior domain.

\begin{theorem} [Theorem  1.5 in \cite{F-Y-Y}]
Let $p > m-1$ if $n=m$ or $m-1<p<\frac{n(m-1)}{n-m}$ if $n > m$, $q=\frac{mp}{p+1}$ and $ M>-\nu^*(n)$ where
 $$
 \nu^*(n):=(p+1) \Big( \frac{n(m-1)-p(n-m)}{mp} \Big )^ {p\over p+1} .
 $$
Then there exist no nontrivial nonnegative supersolutions of  (\ref{main equation}) in $\mathbb R^n\setminus \bar{B}_R$ for any $R>0$.
	\end{theorem}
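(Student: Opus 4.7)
My strategy is to adapt the test-function method developed in \cite{MWZ} to the exterior-domain setting. Suppose for contradiction that $v\ge 0$ with $v\not\equiv 0$ is a supersolution of (\ref{main equation}) on $\mathbb R^n\setminus\bar B_R$; the strong maximum principle for $-\Delta_m$ allows us to assume $v>0$. For a parameter $\beta\in(0,m-1)$ (chosen below), a sufficiently large integer $k$, and a standard cutoff $\phi=\phi_\rho$ supported in the annulus $\rho\le|x|\le 2\rho$ (with $\rho\ge 2R$) satisfying $|\nabla\phi_\rho|\le C/\rho$, I would multiply the differential inequality by the nonnegative test function $v^{-\beta}\phi^k$ (after the usual regularization $v\mapsto v+\varepsilon$) and integrate by parts to obtain
\[
\int v^{p-\beta}\phi^k+\beta\int|\nabla v|^m v^{-\beta-1}\phi^k+M\int|\nabla v|^q v^{-\beta}\phi^k\le k\int|\nabla v|^{m-2}\nabla v\cdot\nabla\phi\,v^{-\beta}\phi^{k-1}.
\]

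The crucial feature of the exponent $q=mp/(p+1)$ inherited from \cite{MWZ} is that Young's inequality gives, for every $\lambda>0$,
\[
|\nabla v|^q v^{-\beta}\le\frac{p}{p+1}\lambda\,|\nabla v|^m v^{-\beta-1}+\frac{1}{p+1}\lambda^{-p}\,v^{p-\beta},
\]
with no restriction on $\beta$. When $M=-|M|<0$, inserting this bound and optimizing over $\lambda$ shows that the coefficient in front of $\int v^{p-\beta}\phi^k$ remains strictly positive exactly when $|M|<(p+1)(\beta/p)^{p/(p+1)}$. A direct computation identifies
\[
\beta^*:=\frac{n(m-1)-p(n-m)}{m},\qquad \nu^*(n)=(p+1)(\beta^*/p)^{p/(p+1)},
\]
and the subcriticality assumption places $\beta^*$ in $(0,m-1]$. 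By continuity, the hypothesis $M>-\nu^*(n)$ permits a choice $\beta\in(0,\beta^*)$ with $|M|<(p+1)(\beta/p)^{p/(p+1)}$.

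With $\beta$ fixed, a second Young inequality applied to the right-hand side splits $|\nabla v|^{m-1}|\nabla\phi|$ into a small multiple of $|\nabla v|^m v^{-\beta-1}\phi^k$ (absorbed into the LHS) and a remainder proportional to $v^{m-1-\beta}|\nabla\phi|^m\phi^{k-m}$. An application of H\"older's inequality with conjugate exponents $r=(p-\beta)/(m-1-\beta)$ and $r'=(p-\beta)/(p-m+1)$ then leads to
\[
\int v^{p-\beta}\phi_\rho^k\le C\int_{\rho\le|x|\le 2\rho}\phi_\rho^{k-mr'}|\nabla\phi_\rho|^{mr'}\le C'\rho^{n-mr'}.
\]
A short calculation shows that $n-mr'<0$ is equivalent to $\beta<\beta^*$, which holds by construction, so sending $\rho\to\infty$ forces $v\equiv 0$ on $\mathbb R^n\setminus\bar B_{2R}$, a contradiction. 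The main difficulty---and the reason the sharp threshold is precisely $\nu^*(n)$---is that $\beta$ must simultaneously satisfy the Young-absorption bound $|M|<(p+1)(\beta/p)^{p/(p+1)}$ (which drives $\beta$ towards $\beta^*$ as $|M|\to\nu^*(n)$) and the decay requirement $\beta<\beta^*$ (which supplies the negative power of $\rho$ from the cutoff). These two constraints remain compatible exactly when $M>-\nu^*(n)$.
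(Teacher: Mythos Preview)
This theorem is not proved in the paper you are working from: it is quoted verbatim from \cite{F-Y-Y} as background for the authors' own results (Theorems~\ref{thm1} and~\ref{thm2}), and no argument for it is supplied here. There is therefore no ``paper's own proof'' to compare against. The machinery the paper \emph{does} develop---the integral identity of Section~\ref{1} obtained by multiplying the equation by $v^\alpha\Delta_m v$ and extracting the trace-free Hessian $E^i_j$---is tailored to \emph{solutions} in all of $\mathbb R^n$, not supersolutions in exterior domains, and is structurally different from your test-function approach.

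That said, your plan is the standard route to this kind of exterior nonexistence result and is almost certainly close to what \cite{F-Y-Y} actually do; the identification $\nu^*(n)=(p+1)(\beta^*/p)^{p/(p+1)}$ with $\beta^*=\frac{n(m-1)-p(n-m)}{m}$ and the twin constraints $|M|<(p+1)(\beta/p)^{p/(p+1)}$ and $\beta<\beta^*$ are exactly right. One detail to tighten: with $\phi_\rho$ supported only in the moving annulus $\{\rho\le|x|\le 2\rho\}$, the conclusion $\int v^{p-\beta}\phi_\rho^k\le C\rho^{n-mr'}\to 0$ does not by itself force $v\equiv 0$, because the set where $\phi_\rho\equiv 1$ escapes to infinity. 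Either sum the estimate over dyadic $\rho$ to obtain $\int_{\mathbb R^n\setminus B_{2R}}v^{p-\beta}<\infty$ and then rerun the argument using this integrability, or take $\phi_\rho$ equal to $1$ on $B_\rho\setminus B_{2R}$ (vanishing on $B_R$ and outside $B_{2\rho}$), accept a fixed contribution from the inner transition layer, and proceed in two steps.
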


If $M=0$, then (\ref{main equation}) reduces to the generalized Lane-Emden equation

\begin{equation}\label{Lane-Emden}
		\Delta_m v +  v^p =0, \quad \mathrm{ in} \quad\Omega,
	\end{equation}
which has been widely studied in the literature \cite{Bidaut-Veron,Bidaut-Veron0,Bidaut-Veron1,chen,DMMS,Gidas,Guedda,Ni,Ni1,Serrin,Serrin1,Vetois}, both when $\Omega$ is bounded and  unbounded. Especially, in the semilinear case $m=2$, one of the celebrated results is given by Gidas and Spruck \cite{Gidas}:
if $n>2$, and $p\in \Big[1,\frac{n+2}{n-2}\Big)$, then any nonnegative solution of (\ref{Lane-Emden}) is identically zero and the result is sharp.
For the case of $m > 1$, radially symmetric positive solutions were studied by Ni and Serrin \cite{Ni0,Ni,Ni1}, and further results in this direction were obtained by Guedda and V\'eron \cite{Guedda} and Bidaut-V\'eron \cite{Bidaut-Veron}. When one studies the so called Liouville property of (\ref{Lane-Emden}), namely whether all positive $C^1$ solutions of (\ref{Lane-Emden}) in $\mathbb R^n$ are constant, two critical exponents appear
$$
m_*=\frac{n(m-1)}{n-m},\quad m^*=\frac{n(m-1)+m}{n-m},
$$
where $n>m$, known as the Serrin exponent and the Sobolev exponent, respectively.

For the case only with gradient terms $(M\neq0)$, we first recall the Hamilton-Jacobi equation
\begin{equation}\label{Hamilton-Jacobi}
		\Delta_m v +  |\nabla v|^{q} =0, \quad \mathrm{ in} \quad\Omega,
	\end{equation}
The Liouville property of (\ref{Hamilton-Jacobi}) was studied by Lions in \cite{P-L} for special case $m =2$, who proved that any $C^2$ solution to (\ref{Hamilton-Jacobi}) with $q > 1$ has to be a constant by using the Bernstein technique. In \cite{Chipot-Weissler} and \cite{SZ}, most of the study deals with the case $q \neq \frac{2p}{p + 1}$. In the critical case, then not only the sign of $M$ but also its value plays a fundamental role, with a delicate interaction with the exponent $p$. Other work on this kind of equation can be seen in \cite{Bidaut-Veron3}. For general case, Bidaut-V\'eron, Garcia-Huidobro and V\'eron \cite{Bidaut-Veron3} proved that any $C^1$ solution of (\ref{Hamilton-Jacobi}) in an arbitrary domain $\Omega$ with $n \ge m > 1$ and $q > m-1$ satisfies
$$
|\nabla v(x)|\le c_{n,m,q}\big(\mathrm {dist}(x,\partial \Omega)\big)^{-\frac{1}{q-m+1}}.
$$
Estimates of this type, not only for the gradient but also for the solutions are called by Serrin and Zou "universal a priori estimates", because they are independent of the solutions and do not need any boundary conditions.

In this paper, we continue the idea used in our recent work \cite{MWZ} to derive Liouville type theorems for positive solutions of (\ref{main equation}) in the case $q=\frac{mp}{p+1}$ varies in the sign of $M$, that produces the following results.
\begin{theorem}
		Let $n \geq 2$, $m - 1 < p \leq \frac{(m - 1)n + m}{n - m}$, $q = \frac{mp}{p + 1}$, then there exists $M_1 > 0$, which depends on $n, m, p$, such that for any $0 < M < M_1$, all the nonnegative solutions of \eqref{main equation} are $v \equiv 0$.
	\end{theorem}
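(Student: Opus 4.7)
The plan is to extend the integral identity scheme of our earlier work \cite{MWZ} from the semilinear Laplacian to the quasilinear operator $\Delta_m$. The guiding observation is that the choice $q=\frac{mp}{p+1}$ makes the gradient nonlinearity $M|\nabla v|^q$ exactly scaling--balanced with both the dissipative term $|\nabla v|^m$ and the potential term $v^p$, so that a single application of Young's inequality can redistribute it between them. For sufficiently small $M$ the Lane--Emden type Liouville argument (Serrin--Zou and Gidas--Spruck) in the range $m-1 < p \leq m^{\ast}:=\frac{(m-1)n+m}{n-m}$ then goes through, and the threshold $M_1$ is precisely what measures the room available inside Young's inequality.

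Concretely, I would fix a cutoff $\eta\in C^{\infty}_c(B_{2R})$ with $\eta\equiv 1$ on $B_R$ and $|\nabla\eta|\leq C/R$, and test (\ref{main equation}) against $v^{s}\eta^{\beta}$ for parameters $s>0$ and $\beta$ large, to be chosen at the end. Integrating by parts the divergence term yields
\begin{equation*}
s\int v^{s-1}|\nabla v|^{m}\eta^{\beta}+\beta\int v^{s}\eta^{\beta-1}|\nabla v|^{m-2}\nabla v\cdot\nabla\eta=\int v^{s+p}\eta^{\beta}+M\int v^{s}|\nabla v|^{q}\eta^{\beta}.
\end{equation*}
Writing $v^{s}|\nabla v|^{q}=\bigl(v^{s-1}|\nabla v|^{m}\bigr)^{p/(p+1)}\bigl(v^{s+p}\bigr)^{1/(p+1)}$, Young's inequality produces the key bound
\begin{equation*}
v^{s}|\nabla v|^{q}\leq \frac{p\,\epsilon}{p+1}\,v^{s-1}|\nabla v|^{m}+\frac{1}{(p+1)\epsilon^{p}}\,v^{s+p},
\end{equation*}
and if $M$ is so small that $\frac{Mp\epsilon}{p+1}<s$ for a suitable $\epsilon>0$, the entire $M$-term of the identity is absorbed. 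What remains is a one--sided Serrin--Zou type inequality connecting $\int v^{s-1}|\nabla v|^{m}\eta^{\beta}$, $\int v^{s+p}\eta^{\beta}$ and a single cutoff remainder.

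From here I would proceed along familiar lines. The cutoff remainder is estimated by a further weighted Young's inequality, producing a tail of order $CR^{-\sigma}\bigl(\int_{B_{2R}}v^{s+p}\bigr)^{\theta}$ with explicit $\sigma=\sigma(n,m,p,s)$ and $\theta\in(0,1)$. A convenient choice of $s\in(m-1,p)$ makes $\sigma>0$ precisely in the subcritical window $m-1<p<m^{\ast}$, so that sending $R\to\infty$ forces $v\equiv 0$. The critical case $p=m^{\ast}$ is handled by supplementing the previous identity with a Pohozaev--type identity obtained by testing against $(x\cdot\nabla v)\eta^{\beta}$; a linear combination of the two identities eliminates the leading scale-invariant contribution, and after a second application of Young's inequality to the extra $M$-term so produced the conclusion $v\equiv 0$ again follows. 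The constant $M_1$ is the minimum of the two smallness thresholds and depends only on $n,m,p$.

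The principal technical obstacle is the critical case. In the semilinear setting $m=2$ of \cite{MWZ} the Pohozaev identity has a clean conformal structure, and the additional $M$-term it generates is easy to absorb. For $\Delta_m$ the Pohozaev identity involves the mixed expression $|\nabla v|^{m-2}\nabla v\cdot\nabla(x\cdot\nabla v)$, which couples radial and tangential derivatives nontrivially; identifying a linear combination of the test function identity and the Pohozaev identity whose coefficients in front of $\int v^{s-1}|\nabla v|^m\eta^{\beta}$ and $\int v^{s+p}\eta^{\beta}$ are simultaneously nonnegative, and then checking that the remaining gradient error can still be absorbed by Young's inequality for the same range of $M$, is the most delicate step and is what ultimately fixes the explicit form of $M_1$.
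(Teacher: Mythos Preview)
Your proposal has a genuine gap in the subcritical range. Testing the equation only against $v^{s}\eta^{\beta}$ is a \emph{first--order} identity: after absorbing the $M$--term by Young's inequality you are left with a single relation of the form
\[
c_{1}\int v^{s-1}|\nabla v|^{m}\eta^{\beta}-c_{2}\int v^{s+p}\eta^{\beta}\leq \text{cutoff error},
\]
and a difference of two positive quantities bounded by a decaying error does not force either one to vanish. To close such an argument you need a second independent inequality linking these integrals with a different ratio of coefficients. This is well known already for the pure Lane--Emden equation: the test function $v^{s}$ alone yields nonexistence only up to the Serrin exponent $m_{*}=\frac{n(m-1)}{n-m}$, not the Sobolev exponent $m^{*}=\frac{(m-1)n+m}{n-m}$. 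Reaching $m^{*}$ requires second--order information coming from a Bochner--type identity.

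This is exactly what the paper does and what your plan omits. The paper multiplies the equation by $v^{\alpha}\Delta_{m}v$ and, after repeated integration by parts, produces an identity whose decisive term is $\frac{n}{n-1}v^{\alpha}E^{i}_{j}E^{j}_{i}$ with $E^{i}_{j}=(|\nabla v|^{m-2}v_{i})_{j}-\frac{\Delta_{m}v}{n}\delta_{ij}$. The nonnegativity of this traceless tensor square is what allows the coefficients in front of $v^{\alpha-2}|\nabla v|^{2m}$ and $v^{\alpha+2p}$ to be made simultaneously positive precisely in the range $m-1<p<m^{*}$, after choosing $\alpha=\frac{-mn-m+2n}{n(m-1)+m}p$ and auxiliary parameters $T,U,P$. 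Your Young--inequality step does appear, but only to handle the $M$--dependent cross terms generated by this second--order identity, not as a substitute for it. For the critical case $p=m^{*}$ the paper does not invoke a Pohozaev identity; instead it perturbs an internal parameter $P>0$ (so that $p-P<m^{*}$) and uses the strict positivity of $M$ together with another Young inequality to recover the needed strict signs. Your Pohozaev plan is therefore both unnecessary and, as you yourself note, considerably more delicate for $\Delta_{m}$.
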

\begin{theorem}
		Let $n \geq 2$, $m - 1 < p < \frac{(m - 1)n + m}{n - m}$, $q = \frac{mp}{p + 1}$, then there exists $M_2 > 0$, which depends on $n, m, p$, such that for any $-M_2 < M < 0$, all the nonnegative solutions of \eqref{main equation} are $v \equiv 0$.
	\end{theorem}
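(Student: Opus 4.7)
The plan is to extend the integral-identity approach of our recent work \cite{MWZ}, which handled the range $0<M<M_1$, to the negative range $-M_2<M<0$. The architecture remains the same: derive a weighted integral identity on a ball $B_{2R}$, use Young's inequality to absorb all unwanted contributions, and let $R\to\infty$. What is genuinely new is that the gradient contribution $M\int v^{-\beta}|\nabla v|^q\xi^s$, which was a welcome nonnegative term in \cite{MWZ}, now carries the wrong sign and must itself be absorbed.

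Concretely, I would reproduce the identity of \cite{MWZ} by testing \eqref{main equation} against a weight $v^{-\beta}\xi^s$, where $\beta>0$ is a parameter to be calibrated, $\xi\in C_c^\infty(B_{2R})$ is a standard cutoff equal to $1$ on $B_R$ with $|\nabla\xi|\le C/R$, and $s$ is taken large. After integration by parts one obtains an identity of the shape
\begin{equation*}
\beta\int v^{-\beta-1}|\nabla v|^m\xi^s+\int v^{p-\beta}\xi^s+M\int v^{-\beta}|\nabla v|^q\xi^s= s\int v^{-\beta}\xi^{s-1}|\nabla v|^{m-2}\nabla v\cdot\nabla\xi.
\end{equation*}
The critical relation $q=mp/(p+1)$ provides the Young conjugate pair $(a,b)=((p+1)/p,\,p+1)$ under which, for every $\varepsilon>0$,
\begin{equation*}
v^{-\beta}|\nabla v|^q\le \varepsilon\, v^{-\beta-1}|\nabla v|^m+C_\varepsilon\, v^{p-\beta}.
\end{equation*}
Thus the $|M|$-term, once moved to the right-hand side, can be absorbed into the two ``good'' integrals on the left provided $|M|$ is small. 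The cutoff error on the right is controlled by an independent application of Young's inequality (between $|\nabla v|^{m-1}$ and $|\nabla\xi|$) followed by H\"older, producing a residual of the form $CR^{-\theta}\int_{B_{2R}}v^{p-\beta}$ whose exponent $\theta=\theta(n,m,p,\beta)$ is strictly positive precisely because $p$ lies strictly below the Sobolev exponent $m^*=((m-1)n+m)/(n-m)$.

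Optimizing jointly in $\beta$, $s$ and $\varepsilon$ produces an explicit threshold $M_2=M_2(n,m,p)>0$ such that for $|M|<M_2$ the effective coefficients of $\int v^{-\beta-1}|\nabla v|^m\xi^s$ and $\int v^{p-\beta}\xi^s$ remain strictly positive after every absorption; letting $R\to\infty$ then kills the residual by strict subcriticality and forces $v\equiv 0$. I expect the main obstacle to lie in this quantitative balancing: the Young constant needed to absorb the $|M|$-term into $\int v^{-\beta-1}|\nabla v|^m\xi^s$ competes directly with the one needed for the cutoff absorption, and ensuring that both coefficients stay positive is precisely what carves out the admissible range of $M$. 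The exclusion of the critical exponent $p=m^*$ in the statement is natural in this scheme: at criticality one has $\theta=0$, the cutoff residual no longer decays in $R$, and no slack remains to tolerate a negative perturbation in $M$, in contrast with the preceding theorem where the favorable sign of $M$ permits the critical exponent.
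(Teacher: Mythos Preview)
Your proposed identity is the \emph{first-order} test-function identity (multiply by $v^{-\beta}\xi^s$ and integrate by parts once). This approach is well known to reach only the Serrin exponent $m_*=\frac{n(m-1)}{n-m}$, not the Sobolev exponent $m^*=\frac{(m-1)n+m}{n-m}$ claimed in the theorem. Concretely: after the Young step on the cutoff term you obtain a residual $CR^{-m}\int_{B_{2R}\setminus B_R} v^{m-1-\beta}\xi^{s-m}$, not the form $CR^{-\theta}\int v^{p-\beta}$ you asserted. Splitting that residual via a further Young inequality against $v^{p-\beta}$ leaves a pure power $CR^{\,n-m(p-\beta)/(p-m+1)}$, and the requirement that this exponent be negative together with $\beta>0$ forces $p<\frac{n(m-1)}{n-m}$. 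So for $m_*\le p<m^*$ your scheme cannot close, regardless of how small $|M|$ is; the exclusion of $p=m^*$ that you attribute to ``$\theta=0$ at criticality'' is in fact happening already at the Serrin threshold.

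The paper's proof is built on a genuinely different, \emph{second-order} integral identity: one multiplies \eqref{main equation} by $v^\alpha\Delta_m v$ and expands $v^\alpha(\Delta_m v)^2$ using a Bochner/Gidas--Spruck-type decomposition through the traceless tensor $E^i_j=X^i_j-\tfrac{1}{n}\Delta_m v\,\delta_{ij}$ (Sections~2--3). It is the positivity of the quadratic term $\frac{n}{n-1}v^\alpha E^i_jE^j_i$ that buys the full Sobolev range. The negative-$M$ case then requires two further ingredients you have not anticipated: (i) a case split on $m$ (roughly $m\lessgtr \tfrac{n}{2}+\tfrac14$) governing whether one can choose the auxiliary parameter $U$ to give the mixed term $Mv^{\alpha-1}|\nabla v|^{m+q}$ the correct sign, and (ii) when the resulting exponent $\alpha+2m-2$ is nonpositive, a substitution $v=w^\sigma$ together with a Keller--Osserman-type minimization to force $\Delta_m w\le 0$, so that the Serrin--Zou lower barrier $w\ge C|x|^{-(n-m)/(m-1)}$ becomes available and closes the cutoff estimate. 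Your one-line absorption of the $M$-term by Young's inequality does not see any of this structure.
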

	The second theorem extends the result in \cite{F-Y-Y} from $m-1<p<\frac{n(m-1)}{n-m}$ to $m - 1 < p < \frac{(m - 1)n + m}{n - m}$. But when $m-1<p<\frac{n(m-1)}{n-m}$, we don't know whether $M_2 > \nu^*(n)$.

	\section{Integral identity}\label{1}
	%Consider the equation:
%	\begin{equation}
%		\Delta_m v +  v^p + M |\nabla v|^{q}=0.
%	\end{equation}

\noindent Multiplying (\ref{main equation}) by $v^{\alpha}\Delta_m v$,
\begin{equation}\label{2}
		\begin{aligned}
			   &\underset{\textcircled{1}}{ v^\alpha(\Delta_m v)^2 } =\underset{I}{ - v^{\alpha + p } \Delta_m v }  \underset{II} { - M v^{\alpha}|\nabla v| ^{ q}\Delta_m v. }
		\end{aligned}
	\end{equation}
	We observed that the term $\textcircled{1}$ is important. Denote
	%In the following contents, we always suppose that $v_1(x_0) = |\nabla v|(x_0)$ and define
	\begin{equation*}
		\begin{aligned}
			&X^i =|\nabla v| ^{ m-2}v_i,\\
            &\Delta_m v=(|\nabla v| ^{ m-2}v_i)_i,\\
            &X^i_{j}=(|\nabla v| ^{ m-2}v_i)_j,\\
			&E^i_{j} = X^i_{j} - \frac{\Delta_m v}{n}\delta_{ij}, \\
		\end{aligned}
	\end{equation*}
then we will do some computations on some important terms as follows.
	%and
%	\begin{equation*}
%		E_{ij} = v_{ij} - \frac{1}{n}\delta_{ij}\Delta v, \text{ for }  i, j = 1,...,n.
%	\end{equation*}
%	We remark that the definition of $E_{ij}$ is global but $G_{ij}$ is not.

	\subsection{The term $v^{\alpha - 1}|\nabla v|^{m}\Delta _m v $}
	
	\begin{equation}\label{section1_equ5}
		\begin{aligned}
			v^{\alpha - 1}|\nabla v|^{m}\Delta_m v &= v^{\alpha - 1}|\nabla v|^m\big( |\nabla v| ^{ m-2}v_i\big)_i \\
             &=(v^{\alpha - 1} |\nabla v| ^{ 2m-2}v_i)_i
- (\alpha - 1)v^{\alpha - 2}|\nabla v|^{2m} - m \underset{III}{v^{\alpha - 1}|\nabla v|^{2m-4}v_i v_j v_{ij} },\\
		\end{aligned}
	\end{equation}
here
	\begin{equation}
	\begin{aligned}
     III&=v^{\alpha - 1}|\nabla v|^{2m-4}v_i v_j v_{ij}\\
     &=v^{\alpha - 1}|\nabla v|^{m-2}v_i v_j(|\nabla v|^{m-2} v_{ij})\\
     &=v^{\alpha - 1}|\nabla v|^{m-2}v_i v_j\Big[(|\nabla v| ^{ m-2}v_i)_j-(m-2)|\nabla v|^{m-4}v_i v_\ell v_{\ell j} \Big]\\
     &=v^{\alpha - 1}|\nabla v|^{m-2}v_i v_j X^i_{j}-(m-2)v^{\alpha - 1}|\nabla v|^{2m-4}v_i v_j v_{ij},
    \end{aligned}
	\end{equation}
therefore,
\begin{equation}\label{section1_equ0}
	\begin{aligned}
 III&={1\over m-1}v^{\alpha - 1}|\nabla v|^{m-2}v_i v_j X^i_{j}\\
 &={1\over m-1}v^{\alpha - 1}|\nabla v|^{m-2}v_i v_j(E^i_{j}+ \frac{\Delta_m v}{n}\delta_{ij})\\
 &={1\over m-1}v^{\alpha - 1}|\nabla v|^{m-2}v_i v_jE^i_{j}+{1\over n(m-1)}v^{\alpha - 1}|\nabla v|^{m}\Delta_m v.
 \end{aligned}
	\end{equation}
Substitute (\ref{section1_equ0}) into (\ref{section1_equ5}), we obtain

\begin{equation}\label{section1_equ50}
	\begin{aligned}
&v^{\alpha - 1}|\nabla v|^{m}\Delta_m v \\
=&\frac{n(m-1)}{n(m-1)+m}\cdot \Big[ \big(v^{\alpha - 1} |\nabla v| ^{ 2m-2}v_i\big)_i-(\alpha - 1)v^{\alpha - 2}|\nabla v|^{2m}-{m\over m-1}v^{\alpha - 1}|\nabla v|^{m-2}v_i v_jE^i_{j} \Big].
\end{aligned}
	\end{equation}

	\subsection{The term \textcircled{1}}\label{The term 1}
	
	In fact, we have
	\begin{equation*}
		\begin{aligned}
			v^{\alpha}(\Delta_m v)^2 & = v^{\alpha} X^i_{i} X^j_{j}\\
			&= (v^{\alpha} X^i_{i} X^j)_j - \alpha v^{\alpha - 1} X^i_{i} X^j v_{j} - v^{\alpha}  X^j (X^i_{i})_j \\
			&=(v^{\alpha} X^i_{i} X^j)_j - \alpha v^{\alpha - 1} |\nabla v|^{m}\Delta_m v-\Big[ (v^{\alpha}  X^j X^i_{j})_i-(v^{\alpha}  X^j)_i X^i_{j}\Big]\\
&=(v^{\alpha} X^i_{i} X^j)_j - \alpha v^{\alpha - 1} |\nabla v|^{m}\Delta_m v-\Big[ (v^{\alpha}  X^j X^i_{j})_i-\alpha v^{\alpha-1}v_i  X^j X^i_{j}- v^{\alpha} X^i_{j}X^j_{i}\Big]\\
&=\Big( v^{\alpha}\Delta_m v X^i-v^{\alpha}  X^j X^i_{j}\Big)_i-\alpha v^{\alpha - 1} |\nabla v|^{m}\Delta_m v+\alpha v^{\alpha-1}v_i  X^j X^i_{j}+ v^{\alpha} X^i_{j}X^j_{i}.
		\end{aligned}
	\end{equation*}
Since $ X^i_{j}=E^i_{j} + \frac{\Delta_m v}{n}\delta_{ij} $,
then
\begin{equation*}
		\begin{aligned}
			v^{\alpha}(\Delta_m v)^2 & =(1-{1\over n}) (v^{\alpha}\Delta_m v X^i)_i-(v^{\alpha}E^i_{j} X^j )_i-\alpha v^{\alpha - 1} |\nabla v|^{m}\Delta_m v\\
&+\alpha v^{\alpha-1}v_i  X^j E^i_{j}+{\alpha\over n}v^{\alpha - 1} |\nabla v|^{m}\Delta_m v+v^{\alpha} E^i_{j}E^j_{i}+{1\over n}v^{\alpha}(\Delta_m v)^2,
\end{aligned}
	\end{equation*}
$\Rightarrow$
\begin{equation*}
		\begin{aligned}
(1-{1\over n})v^{\alpha}(\Delta_m v)^2 & =(1-{1\over n}) (v^{\alpha}\Delta_m v X^i)_i-(v^{\alpha}E^i_{j} X^j )_i-(1-{1\over n})\alpha v^{\alpha - 1} |\nabla v|^{m}\Delta_m v\\
&+\alpha v^{\alpha-1}v_i  X^j E^i_{j}+v^{\alpha} E^i_{j}E^j_{i}.
\end{aligned}
	\end{equation*}

As a result, we obtain
\begin{equation}\label{delta2}
		\begin{aligned}
			\textcircled{1} &=v^{\alpha}(\Delta_m v)^2\\
 & =(v^{\alpha}\Delta_m v X^i)_i-{n\over n-1}(v^{\alpha}E^i_{j} X^j )_i-\alpha v^{\alpha - 1} |\nabla v|^{m}\Delta_m v+{n\over n-1}\alpha v^{\alpha - 1} |\nabla v|^{m-2}E^i_{j}v_i v_j+{n\over n-1}v^{\alpha} E^i_{j}E^j_{i}\\
&= (v^{\alpha}\Delta_m v X^i)_i-{n\over n-1}(v^{\alpha}E^i_{j} X^j )_i +{n\over n-1}v^{\alpha} E^i_{j}E^j_{i}+{n\over n-1}\alpha v^{\alpha - 1} |\nabla v|^{m-2}E^i_{j}v_i v_j\\
&-\alpha\frac{n(m-1)}{n(m-1)+m}\Big[ (v^{\alpha - 1} |\nabla v| ^{ 2m-2}v_i)_i-(\alpha - 1)v^{\alpha - 2}|\nabla v|^{2m}-{m\over m-1}v^{\alpha - 1}|\nabla v|^{m-2}v_i v_jE^i_{j} \Big]\\
&=\big(v^{\alpha}\Delta_m v X^i \big)_i-{n\over n-1}\big(v^{\alpha}E^i_{j} X^j \big)_i-\alpha\frac{n(m-1)}{n(m-1)+m}\Big(v^{\alpha - 1} |\nabla v| ^{ 2m-2}v_i\Big)_i+{n\over n-1}v^{\alpha} E^i_{j}E^j_{i}\\
&+\alpha(\alpha - 1)\frac{n(m-1)}{n(m-1)+m}v^{\alpha - 2}|\nabla v|^{2m}\\
&+\alpha\Big[{n\over n-1}+\frac{nm}{n(m-1)+m}   \Big]v^{\alpha - 1} |\nabla v|^{m-2}E^i_{j}v_i v_j.
\end{aligned}
	\end{equation}

	\subsection{The term $A$}
	
	\begin{equation*}
		\begin{aligned}
			A &= (v^{\alpha}X^i \Delta_m v)_i\\
			&= \alpha v^{\alpha - 1} |\nabla v|^m \Delta_m v  + v^{\alpha}(\Delta_m v)^2+ \underset{A_1}{ v^{\alpha}X^i (\Delta_m v)_i }.\\
		\end{aligned}
	\end{equation*}
	Substitute (\ref{main equation}) into $A_1$,
	\begin{equation*}
		\begin{aligned}
			A_1 &=  v^{\alpha}X^i \Big( -v^{p} - M|\nabla v|^q\Big)_i\\
			&= -pv^{\alpha + p - 1}|\nabla v|^{m} - qM v^{\alpha}|\nabla v|^{ m+q - 4}v_i v_j v_{ij}.
		\end{aligned}
	\end{equation*}	
	Multiply (\ref{main equation})  by $v^{\alpha - 1}|\nabla v|^{m}$
	\begin{equation}
		\begin{aligned}
			- v^{\alpha + p - 1}|\nabla v|^{m} = v^{\alpha - 1}|\nabla v|^{m}\Delta_m v  + M v^{\alpha - 1}|\nabla v|^{m + q}.
		\end{aligned}
	\end{equation}
	Therefore, we have
	\begin{equation}\label{sub2_2}
		\begin{aligned}
			A_1 &=  p v^{\alpha - 1}|\nabla v|^{m}\Delta_m v  + p M v^{\alpha - 1}|\nabla v|^{m + q}- qM v^{\alpha}|\nabla v|^{m + q - 4}v_i v_j v_{ij}.\\
		\end{aligned}
	\end{equation}
On the other hand, we know
\begin{equation}\label{sub2_1}
		\begin{aligned}
			  - qM v^{\alpha }|\nabla v|^{m+ q - 4}v_i v_j v_{ij}&=   - Mv^{\alpha }(|\nabla v|^q)_i |\nabla v|^{m-2} v_i\\
             &=  - Mv^{\alpha }(|\nabla v|^q)_i X^i\\
             &= - M\Big[ (v^{\alpha }|\nabla v|^q X^i)_i -|\nabla v|^q (v^\alpha X^i)_i\Big]\\
             &=  - M(v^{\alpha }|\nabla v|^q X^i)_i + \alpha M v^{\alpha - 1}|\nabla v|^{m+ q}+Mv^{\alpha }|\nabla v|^q\Delta_m v\\
			 &= -  M(v^{\alpha }|\nabla v|^q X^i)_i +  \alpha M v^{\alpha - 1}|\nabla v|^{m + q }  +  Mv^{\alpha}|\nabla v|^{ q}\Big( - v^{p} - M |\nabla v|^q \Big)\\
			 &= -M(v^{\alpha }|\nabla v|^q X^i)_i +  \alpha M v^{\alpha - 1}|\nabla v|^{m + q } - Mv^{\alpha + p}|\nabla v|^ q - M^2 v^{\alpha}|\nabla v|^{2q}.
		\end{aligned}
	\end{equation}
$\Rightarrow$
the last two terms of (\ref{sub2_2}) is
	\begin{equation}\label{sub2_1}
		\begin{aligned}
			 & pMv^{\alpha - 1}|\nabla v|^{m+ q } - qM v^{\alpha }|\nabla v|^{m+ q - 4}v_i v_j v_{ij}\\
			 &= -M(v^{\alpha }|\nabla v|^q X^i)_i +\Big( p + \alpha\Big) Mv^{\alpha - 1}|\nabla v|^{m + q } - Mv^{\alpha + p}|\nabla v|^ q - M^2 v^{\alpha}|\nabla v|^{2q}.
		\end{aligned}
	\end{equation}
	So we get that $A_1$ and
	\begin{equation*}
		\begin{aligned}
			A &= (v^{\alpha}X^i \Delta_m v)_i\\
			&= (p+\alpha) v^{\alpha - 1} |\nabla v|^m \Delta_m v  + v^{\alpha}(\Delta_m v)^2\\
			&-M(v^{\alpha }|\nabla v|^q X^i)_i +\Big( p + \alpha\Big) Mv^{\alpha - 1}|\nabla v|^{m + q } - Mv^{\alpha + p}|\nabla v|^ q - M^2 v^{\alpha}|\nabla v|^{2q}.
		\end{aligned}
	\end{equation*}
Substitute \eqref{section1_equ50} into it:
	\begin{equation}\label{delta1}
		\begin{aligned}
			  &-v^{\alpha}(\Delta_m v)^2\\
			& = -A + (\alpha + p ) \cdot\frac{n(m-1)}{n(m-1)+m}\Big[ (v^{\alpha - 1} |\nabla v| ^{ 2m-2}v_i)_i-(\alpha - 1)v^{\alpha - 2}|\nabla v|^{2m} \\
			&-{m\over m-1}v^{\alpha - 1}|\nabla v|^{m-2}v_i v_jE^i_{j} \Big] \\
			&- M(v^{\alpha }|\nabla v|^q X^i)_i + \Big( p + \alpha\Big) Mv^{\alpha - 1}|\nabla v|^{m + q } - Mv^{\alpha + p}|\nabla v|^ q - M^2 v^{\alpha}|\nabla v|^{2q}.
		\end{aligned}
	\end{equation}

	\subsection{The final equation}

\medskip
Combine (\ref{delta2}) and  (\ref{delta1}) to delete $v^{\alpha}(\Delta_m v)^2$, we have	
	\begin{align*}
		0&= -A + (\alpha + p ) \cdot\frac{n(m-1)}{n(m-1)+m}\Big[ (v^{\alpha - 1} |\nabla v| ^{ 2m-2}v_i)_i-(\alpha - 1)v^{\alpha - 2}|\nabla v|^{2m} \\
		&-{m\over m-1}v^{\alpha - 1}|\nabla v|^{m-2}v_i v_jE^i_{j} \Big] \\
			&- M(v^{\alpha }|\nabla v|^q X^i)_i + \Big( p + \alpha\Big) Mv^{\alpha - 1}|\nabla v|^{m + q } - Mv^{\alpha + p}|\nabla v|^ q - M^2 v^{\alpha}|\nabla v|^{2q}\\
			&+\big(v^{\alpha}\Delta_m v X^i \big)_i-{n\over n-1}\big(v^{\alpha}E^i_{j} X^j \big)_i-\alpha\frac{n(m-1)}{n(m-1)+m}\Big(v^{\alpha - 1} |\nabla v| ^{ 2m-2}v_i\Big)_i+{n\over n-1}v^{\alpha} E^i_{j}E^j_{i}\\
&+\alpha(\alpha - 1)\frac{n(m-1)}{n(m-1)+m}v^{\alpha - 2}|\nabla v|^{2m}\\
&+\alpha\Big[{n\over n-1}+\frac{nm}{n(m-1)+m}   \Big]v^{\alpha - 1} |\nabla v|^{m-2}E^i_{j}v_i v_j,
	\end{align*}
	
that is
	\begin{equation*}
		\begin{aligned}
			0& = -A + (\alpha + p ) \cdot\frac{n(m-1)}{n(m-1)+m} (v^{\alpha - 1} |\nabla v| ^{ 2m-2}v_i)_i -  M(v^{\alpha }|\nabla v|^q X^i)_i \\
			&+(v^{\alpha}\Delta_m v X^i)_i-{n\over n-1}(v^{\alpha}E^i_{j} X^j )_i-\alpha\frac{n(m-1)}{n(m-1)+m}\Big(v^{\alpha - 1} |\nabla v| ^{ 2m-2}v_i\Big)_i\\
			 &+{n\over n-1}v^{\alpha} E^i_{j}E^j_{i} -p(\alpha-1)\frac{n(m-1)}{n(m-1)+m}v^{\alpha - 2}|\nabla v|^{2m}\\
			 & + \Big[{n\alpha\over n-1}- \frac{npm}{n(m-1)+m}\Big]v^{\alpha - 1}|\nabla v|^{m-2}v_i v_jE^i_{j}\\
			& + \Big( p + \alpha\Big) Mv^{\alpha - 1}|\nabla v|^{m + q } -Mv^{\alpha + p}|\nabla v|^ q - M^2 v^{\alpha}|\nabla v|^{2q}.
		\end{aligned}
	\end{equation*}
Now we can integrate it into the following form:	
	\begin{equation}\label{sec1_final}
		\begin{aligned}
		    & 0 = W + {n\over n-1}v^{\alpha} E^i_{j}E^j_{i} - p(\alpha-1)\frac{n(m-1)}{n(m-1)+m}v^{\alpha - 2}|\nabla v|^{2m}\\
		    & +\Big[{n\alpha\over n-1} - \frac{npm}{n(m-1)+m}\Big]v^{\alpha - 1}|\nabla v|^{m-2}v_i v_jE^i_{j} +\Big( p + \alpha\Big) Mv^{\alpha - 1}|\nabla v|^{m + q } \\
			&-Mv^{\alpha + p}|\nabla v|^ q - M^2 v^{\alpha}|\nabla v|^{2q},\\
		\end{aligned}
	\end{equation}
	where $W$ consists of all the divergence terms.

	\section{Conditions}
	In order to obtain the Liouville  theorem, we need some extra conditions and we also insert coefficients $T,U,P$ for more detailed computations.

	\noindent Multiply (\ref{main equation})  by $v^{\alpha+p}$:
\begin{equation}
		\begin{aligned}
			 v^{\alpha + 2p} +  M v^{\alpha+p}|\nabla v|^{q} &= -v^{\alpha + p}\Delta_m v\\
	 		&= -v^{\alpha + p}(|\nabla v|^{m-2}v_i)_i\\
            &=-(v^{\alpha + p}|\nabla v|^{m-2}v_i)_i+ (\alpha+p)v^{\alpha + p-1}|\nabla v|^m.
		\end{aligned}
	\end{equation}

\noindent	Multiply (\ref{main equation})  by $ v^{\alpha - 1}|\nabla v|^{m}$:
	\begin{equation*}
		\begin{aligned}
			&- M v^{\alpha - 1}|\nabla v|^{m+ q } - v^{\alpha + p-1}|\nabla v|^m\\
			 &= v^{\alpha - 1}|\nabla v|^m \Delta_m v\\
             &= \frac{n(m-1)}{n(m-1)+m} \Big[ (v^{\alpha - 1} |\nabla v| ^{ 2m-2}v_i)_i-(\alpha - 1)v^{\alpha - 2}|\nabla v|^{2m} - {m\over m-1}v^{\alpha - 1}|\nabla v|^{m-2}v_i v_jE^i_{j} \Big].
		\end{aligned}
	\end{equation*}

\noindent Multiply (\ref{main equation})  by $M v^{\alpha}|\nabla v|^{ q}$:
	\begin{equation}\label{01}
		\begin{aligned}
			 M  v^{\alpha + p}|\nabla v|^{q} +  M^2 v^{\alpha}|\nabla v|^{ 2q}&=-Mv^{\alpha}|\nabla v|^{ q}\Delta_m v,\\
		\end{aligned}
	\end{equation}
as we can see that
\begin{equation}\label{X}
		\begin{aligned}
X^i_j v_i v_j&=(|\nabla v|^{m-2}v_i)_jv_i v_j\\
&=( m-2)|\nabla v|^{m-4}v_{\ell j}v_\ell v_i v_i v_j+|\nabla v|^{m-2}v_{ij} v_i v_j\\
&=(m-1)|\nabla v|^{m-2}v_{ij} v_i v_j,
\end{aligned}
	\end{equation}
if we insert (\ref{X}) into (\ref{01}), we obtain
\begin{align*}
	v^{\alpha}|\nabla v|^{ q}\Delta_m v &= \big( v^{\alpha}|\nabla v|^{ q} X^i\big)_i - \alpha  v^{\alpha-1}|\nabla v|^{ m+q} - q v^{\alpha}|\nabla v|^{ m+q-4}v_{ij}v_iv_j\\
	&= \big( v^{\alpha}|\nabla v|^{ q} X^i\big)_i - \alpha  v^{\alpha-1}|\nabla v|^{ m+q} - \frac{ q}{m - 1} v^{\alpha}|\nabla v|^{ q - 2} X^i_j v_i v_j\\
	&= \big( v^{\alpha}|\nabla v|^{ q} X^i\big)_i - \alpha  v^{\alpha-1}|\nabla v|^{ m+q} - \frac{ q}{m - 1} v^{\alpha}|\nabla v|^{ q - 2} E^i_j v_i v_j - \frac{q}{n(m - 1)}v^{\alpha}|\nabla v|^q \Delta_m v,
\end{align*}

\begin{equation}
	\begin{aligned}
		\Rightarrow v^{\alpha}|\nabla v|^{ q}\Delta_m v &= \frac{n(m - 1)}{n(m - 1) + q} \big( v^{\alpha}|\nabla v|^{ q} X^i\big)_i - \frac{n(m - 1)\alpha}{n(m - 1) + q} v^{\alpha-1}|\nabla v|^{ m+q} \\
		&- \frac{nq}{n(m - 1) + q} v^{\alpha}|\nabla v|^{ q - 2} E^i_j v_i v_j,
	\end{aligned}
\end{equation}

so (\ref{01}) turns to be
\begin{equation}
		\begin{aligned}
			 &Mv^{\alpha + p}|\nabla v|^{q} +  M^2 v^{\alpha}|\nabla v|^{ 2q} \\
			 &=-M\frac{n(m - 1)}{n(m - 1) + q} \big( v^{\alpha}|\nabla v|^{ q} X^i\big)_i + \frac{n(m - 1)\alpha}{n(m - 1) + q} M v^{\alpha-1}|\nabla v|^{ m+q} \\
		&+ \frac{nq}{n(m - 1) + q} M v^{\alpha}|\nabla v|^{ q - 2} E^i_j v_i v_j.
\end{aligned}
	\end{equation}
After that, we now rewrite (\ref{sec1_final}) as follows
\begin{equation}
		\begin{aligned}
		 & 0 = W +  {n\over n-1}v^{\alpha} E^i_{j}E^j_{i} -p(\alpha-1)\frac{n(m-1)}{n(m-1)+m}v^{\alpha - 2}|\nabla v|^{2m} \\
		&+\Big[{n\alpha\over n-1}- \frac{npm}{n(m-1)+m}\Big]v^{\alpha - 1}|\nabla v|^{m-2}v_i v_jE^i_{j} \\
		&+\Big( p + \alpha\Big) Mv^{\alpha - 1}|\nabla v|^{m + q } -Mv^{\alpha + p}|\nabla v|^ q - M^2 v^{\alpha}|\nabla v|^{2q}\\
		&+T \Bigg[ v^{\alpha + 2p} +  M v^{\alpha+p}|\nabla v|^{q} - (\alpha+p)v^{\alpha + p-1}|\nabla v|^m \Bigg]\\
		&+ U \Bigg[ Mv^{\alpha + p}|\nabla v|^{q} +  M^2 v^{\alpha}|\nabla v|^{ 2q} - \frac{n(m - 1)\alpha}{n(m - 1) + q} M v^{\alpha-1}|\nabla v|^{ m+q} \\
		&- \frac{nq}{n(m - 1) + q} M v^{\alpha}|\nabla v|^{ q - 2} E^i_j v_i v_j \Bigg]\\
		&+P \Bigg[ - M v^{\alpha - 1}|\nabla v|^{m+ q } - v^{\alpha + p-1}|\nabla v|^m + \frac{n(m - 1)(\alpha - 1)}{n(m - 1) + m} v^{\alpha - 2}|\nabla v|^{2m} \\
		&+ \frac{nm}{n(m - 1) + m} v^{\alpha - 1}|\nabla v|^{m-2}v_i v_jE^i_{j} \Bigg],
		\end{aligned}
	\end{equation}
that is	
	\begin{align*}
		0 &= W +  {n\over n-1}v^{\alpha} E^i_{j}E^j_{i} + \Bigg[ {n\alpha\over n-1}- \frac{nm(p - P)}{n(m-1)+m} \Bigg]v^{\alpha - 1}|\nabla v|^{m-2}v_i v_jE^i_{j}\\
		&- \frac{nqU}{n(m - 1) + q} M v^{\alpha}|\nabla v|^{ q - 2} E^i_j v_i v_j  -(p - P)(\alpha-1)\frac{n(m-1)}{n(m-1)+m}v^{\alpha - 2}|\nabla v|^{2m}  \\
		&+ T v^{\alpha + 2p} + (U - 1)M^2 v^{\alpha}|\nabla v|^{2q} + \Bigg[ - (\alpha + p)T - P \Bigg]v^{\alpha + p-1}|\nabla v|^m\\
		&+ \Bigg[ \alpha + p - P - \frac{n(m - 1)\alpha U}{n(m - 1) + q} \Bigg]Mv^{\alpha - 1}|\nabla v|^{m + q } + \Big(T + U - 1\Big)Mv^{\alpha + p}|\nabla v|^ q.
	\end{align*}
	Define
	$$
		L_{ij} := v_{i} v_{j} - \frac{1}{n}|\nabla v|^2 \delta_{ij},
	$$
	then
	$$
		|L_{ij}|^2 = \frac{n - 1}{n}|\nabla v|^4,
	$$
	and
	\begin{align*}
		0 &\geq W - \frac{n - 1}{4n} v^{\alpha} \Bigg\{ \Bigg[ {n\alpha\over n-1}- \frac{nm(p - P)}{n(m-1)+m} \Bigg]v^{-1}|\nabla v|^{m-2}L_{ij}  - \frac{nqU}{n(m - 1) + q} M |\nabla v|^{ q - 2} L_{ij}  \Bigg\}^2 \\
		&-(p - P)(\alpha-1)\frac{n(m-1)}{n(m-1)+m}v^{\alpha - 2}|\nabla v|^{2m}  \\
		&+ T v^{\alpha + 2p} + (U - 1)M^2 v^{\alpha}|\nabla v|^{2q} + \Bigg[ - (\alpha + p)T - P \Bigg]v^{\alpha + p-1}|\nabla v|^m\\
		&+ \Bigg[ \alpha + p - P - \frac{n(m - 1)\alpha U}{n(m - 1) + q} \Bigg]Mv^{\alpha - 1}|\nabla v|^{m + q } + \Big(T + U - 1\Big)Mv^{\alpha + p}|\nabla v|^ q\\
		&= W - \frac{(n - 1)^2}{4n^2} v^{\alpha}|\nabla v|^4 \Bigg\{ \Bigg[ {n\alpha\over n-1}- \frac{nm(p - P)}{n(m-1)+m} \Bigg]v^{-1}|\nabla v|^{m-2}  - \frac{nqU}{n(m - 1) + q} M |\nabla v|^{ q - 2}   \Bigg\}^2 \\
		&-(p - P)(\alpha-1)\frac{n(m-1)}{n(m-1)+m}v^{\alpha - 2}|\nabla v|^{2m}  \\
		&+ T v^{\alpha + 2p} + (U - 1)M^2 v^{\alpha}|\nabla v|^{2q} + \Bigg[ - (\alpha + p)T - P \Bigg]v^{\alpha + p-1}|\nabla v|^m\\
		&+ \Bigg[ \alpha + p - P - \frac{n(m - 1)\alpha U}{n(m - 1) + q} \Bigg]Mv^{\alpha - 1}|\nabla v|^{m + q } + \Big(T + U - 1\Big)Mv^{\alpha + p}|\nabla v|^ q.\\
\end{align*}
After integration,
\begin{align}\label{last equation}
		RHS&= W + \Bigg\{ -(p - P)(\alpha-1)\frac{n(m-1)}{n(m-1)+m} - \frac{(n - 1)^2}{4n^2}\Bigg[ {n\alpha\over n-1}- \frac{nm(p - P)}{n(m-1)+m} \Bigg]^2 \Bigg\}v^{\alpha - 2}|\nabla v|^{2m} \nonumber\\
		&+ T v^{\alpha + 2p} + \Bigg\{ U - 1 - \frac{(n - 1)^2}{4n^2}\left[ \frac{nqU}{n(m - 1) + q}\right]^2 \Bigg\}M^2 v^{\alpha}|\nabla v|^{2q} \nonumber\\
		&+ \Bigg[ - (\alpha + p)T - P \Bigg]v^{\alpha + p-1}|\nabla v|^m  \nonumber\\
		&+ \Big\{ \alpha + p - P - \frac{n(m - 1)\alpha U}{n(m - 1) + q} + \frac{(n - 1)^2}{2n^2}\big[ {n\alpha\over n-1}- \frac{nm(p - P)}{n(m-1)+m} \big] \frac{nqU}{n(m - 1) + q}  \Big\}Mv^{\alpha - 1}|\nabla v|^{m + q } \nonumber\\
		&+ \Big(T + U - 1\Big)Mv^{\alpha + p}|\nabla v|^ q. \nonumber\\
	\end{align}
	If the following conditions hold at the same time:
	\begin{numcases}{}
		\label{sec2_condition1} -(p - P)(\alpha-1)\frac{n(m-1)}{n(m-1)+m} - \frac{(n - 1)^2}{4n^2}\Bigg[ {n\alpha\over n-1}- \frac{nm(p - P)}{n(m-1)+m} \Bigg]^2 > 0,\\
		\label{sec2_condition2} T > 0,\\	
		\label{sec2_condition3}  U - 1 - \frac{(n - 1)^2}{4n^2}\left[ \frac{nqU}{n(m - 1) + q}\right]^2 > 0,\\
		\label{sec2_condition4} - (\alpha + p)T - P\geq 0,\\
		\label{sec2_condition5} \Bigg\{ \alpha + p - P - \frac{n(m - 1)\alpha U}{n(m - 1) + q} + \frac{(n - 1)^2}{2n^2}\Bigg[ {n\alpha\over n-1}- \frac{nm(p - P)}{n(m-1)+m} \Bigg]\cdot \frac{nqU}{n(m - 1) + q}  \Bigg\}M \geq 0,\quad \quad\\
		\label{sec2_condition6} \Big(T + U - 1\Big)M \geq 0,
	\end{numcases}
	then there always exists $\varepsilon > 0$, such that
	\begin{equation}\label{sub4_eq1}
		\begin{aligned}
			&\varepsilon v^{\alpha - 2}|\nabla v|^{2m} + \varepsilon v^{\alpha}|\nabla v|^{2m - 6}\left(\sum_j v_j v_{ij}\right)^2 + \varepsilon v^{\alpha + 2p} + \varepsilon v^{\alpha}|\nabla v|^{2q} \\
		&\leq B_1 (v^{\alpha - 1} |\nabla v| ^{ 2m-2}v_i)_i + B_2(v^{\alpha }|\nabla v|^q X^i)_i + B_3(v^{\alpha + p} X^i)_i + B_4(v^{\alpha}X^i_{j} X^j )_i .
		\end{aligned}
	\end{equation}
	To make sure (\ref{sec2_condition1}) hold, we can choose
	\begin{align*}
		\alpha = \frac{-mn - m + 2n}{n(m - 1) + m}(p - P),
	\end{align*}
	and
	\begin{align*}
		m - 1 < p - P < \frac{(m - 1)n + m}{n - m}.
	\end{align*}

	\section{Young's inequality and cut-off functions}
	In this part, we assume \eqref{sub4_eq1} holds and prove that $|\nabla v| \equiv 0$. Define $\eta$ is a smooth cut-off function, satisfying that
	\begin{equation*}
		\begin{aligned}
			&\eta \equiv 1 \quad \text{in} \quad B_{R},\\
			&\eta \equiv 0 \quad \text{in} \quad \mathbb R^n\backslash B_{2R}.\\
		\end{aligned}
	\end{equation*}
	 Multiply \eqref{sub4_eq1} by $\eta^{\delta}$ and integrate over $\mathbb R^n$,
	\begin{align*}
		&\varepsilon\int_{R^n}  v^{\alpha - 2}|\nabla v|^{2m} \eta^{\delta} + \varepsilon \int_{R^n} v^{\alpha}|\nabla v|^{2m - 6}\left(\sum_j v_j v_{ij}\right)^2\eta^{\delta} + \varepsilon \int_{R^n} v^{\alpha + 2p}\eta^{\delta} + \varepsilon \int_{R^n} v^{\alpha}|\nabla v|^{2q}\eta^{\delta}\\
		&\leq B_1\delta \int_{B_{2R}\backslash B_R}  v^{\alpha - 1} |\nabla v| ^{ 2m-2}v_i \eta^{\delta - 1}\eta_i + B_2 \delta \int_{B_{2R}\backslash B_R} v^{\alpha }|\nabla v|^{m + q - 2} v_i \eta^{\delta - 1}\eta_i \\
&+ B_3 \delta \int_{B_{2R}\backslash B_R} v^{\alpha + p} |\nabla v|^{m - 2}v_i \eta^{\delta - 1}\eta_i 
		+ B_4 \delta  \int_{B_{2R}\backslash B_R} v^{\alpha}X^i_{j} X^j\eta^{\delta - 1}\eta_i.
	\end{align*}
	Since we know that
	\begin{align*}
		& \int_{B_{2R}\backslash B_R} v^{\alpha}X^i_{j} X^j\eta^{\delta - 1}\eta_i \\
		&= -\int_{B_{2R}\backslash B_R}  (v^{\alpha} X^j\eta^{\delta - 1}\eta_i)_jX^i\\
		&= -\alpha \int_{B_{2R}\backslash B_R} v^{\alpha - 1}|\nabla v|^{2m - 2}v_i \eta^{\delta - 1}\eta_i - \int_{B_{2R}\backslash B_R} v^{\alpha} \Delta_m v \eta^{\delta - 1}\eta_i X^i - \int_{B_{2R}\backslash B_R}  v^{\alpha} X^j(\eta^{\delta - 1}\eta_i)_jX^i\\
		&= -\alpha \int_{B_{2R}\backslash B_R} v^{\alpha - 1}|\nabla v|^{2m - 2}v_i \eta^{\delta - 1}\eta_i + \int_{B_{2R}\backslash B_R} v^{\alpha} ( v^p + M|\nabla v|^q) \eta^{\delta - 1}\eta_i X^i\\
& - \int_{B_{2R}\backslash B_R}  v^{\alpha} |\nabla v|^{2m - 4}v_i v_j (\eta^{\delta - 1}\eta_i)_j\\
        &=-\alpha \int_{B_{2R}\backslash B_R} v^{\alpha - 1}|\nabla v|^{2m - 2}v_i \eta^{\delta - 1}\eta_i +\int_{B_{2R}\backslash B_R} v^{\alpha + p} |\nabla v|^{m - 2}v_i \eta^{\delta - 1}\eta_i+M\int_{B_{2R}\backslash B_R} v^{\alpha }|\nabla v|^{m + q - 2} v_i \eta^{\delta - 1}\eta_i\\
        &-\int_{B_{2R}\backslash B_R} v^{\alpha} |\nabla v|^{2m - 4}v_i v_j (\eta^{\delta - 1}\eta_i)_j,
	\end{align*}
	then we get
	\begin{align*}
		&\varepsilon\int_{R^n}  v^{\alpha - 2}|\nabla v|^{2m} \eta^{\delta} + \varepsilon \int_{R^n} v^{\alpha}|\nabla v|^{2m - 6}\left(\sum_j v_j v_{ij}\right)^2\eta^{\delta} + \varepsilon \int_{R^n} v^{\alpha + 2p} \eta^{\delta}+ \varepsilon \int_{R^n} v^{\alpha}|\nabla v|^{2q} \eta^{\delta}\\
		&\leq \frac{\varepsilon}{2} \int_{B_{2R}\backslash B_R}  v^{\alpha - 2}|\nabla v|^{2m} \eta^{\delta} + \frac{C}{R^2} \int_{B_{2R}\backslash B_R} v^{\alpha} |\nabla v|^{2m - 2} \eta^{\delta - 2}\\
		&+ \frac{\varepsilon}{2} \int_{B_{2R}\backslash B_R}  v^{\alpha}|\nabla v|^{2q} \eta^{\delta} +  \frac{C}{R^2} \int_{B_{2R}\backslash B_R} v^{\alpha} |\nabla v|^{2m - 2} \eta^{\delta - 2}\\
		&+ \frac{\varepsilon}{2} \int_{B_{2R}\backslash B_R}  v^{\alpha + 2p} \eta^{\delta} +  \frac{C}{R^2} \int_{B_{2R}\backslash B_R} v^{\alpha} |\nabla v|^{2m - 2} \eta^{\delta - 2}.\\
	\end{align*}
Therefore, we get that

	\begin{align}\label{Y1}
     &\int_{R^n}  v^{\alpha - 2}|\nabla v|^{2m} \eta^{\delta} +  \int_{R^n} v^{\alpha + 2p}\eta^{\delta} \leq \frac{C}{R^2} \int_{B_{2R}\backslash B_R} v^{\alpha} |\nabla v|^{2m - 2} \eta^{\delta - 2} .
	\end{align}
	Define $p_1, q_1, \sigma_1 > 0$, such that
	\begin{equation}\label{parameter3}
		\frac{1}{p_1} + \frac{1}{q_1} + \frac{1}{\sigma_1} = 1, \text{ and } p_1, q_1, \sigma_1 > 0.
	\end{equation}
So by Young's inequality, we know
	\begin{equation}\label{Y2}
		\begin{aligned}
			&R^{-2} \int_{B_{2R}\backslash B_R} v^{\alpha}|\nabla v|^{2m - 2}\eta^{\delta - 2}\\
			&= R^{-2}\int_{B_{2R}\backslash B_R} v^{\alpha - A}|\nabla v|^{2m - 2 }\cdot v^{A}\eta^{\delta - 2}\\
			&\leq \varepsilon_0 \int_{B_{2R}\backslash B_R} v^{\alpha - 2}|\nabla v|^{2m}\eta^\delta + \varepsilon_0 \int_{B_{2R}\backslash B_R} v^{\alpha + 2p}\eta^{\delta} + CR^{-2\sigma_1} \int_{B_{2R}\backslash B_R}\eta^{\delta - 2\sigma_1}.
\end{aligned}
\end{equation}		
Combine (\ref{Y1}) and (\ref{Y2}), it follows that

	\begin{equation}
		\begin{aligned}	 & \int_{R^n} v^{\alpha +  2p}\eta^{\delta} + \int_{R^n} v^{\alpha - 2}|\nabla v|^{2m}\eta^\delta \\
			&\leq CR^{-2\sigma_1} \int_{B_{2R}\backslash B_R} \eta^{\delta - 2\sigma_1}\\
			&\leq CR^{n - 2\sigma_1} \rightarrow 0, \text{as R tends to infinity}.
		\end{aligned}
	\end{equation}
The last two steps come from the condition that
	\begin{equation}\label{parameter5}
		\begin{aligned}
			\delta - 2\sigma_1 >0,\\
			n - 2\sigma_1  < 0.
		\end{aligned}
	\end{equation}
	So we always take $\delta$ large enough.
	In conclusion, we need
	\begin{equation}\label{young_con}
		\begin{aligned}
			&\frac{\alpha + 2p}{A} = p_1 > 0,\\
			&\frac{\alpha - 2}{\alpha - A} = \frac{2m}{2m - 2} = q_1 > 0,\\
			&1 - \frac{2}{n} < \frac{1}{p_1} + \frac{1}{q_1} < 1.
		\end{aligned}
	\end{equation}
	
	Therefore, as long as the conditions \eqref{sec2_condition1} - \eqref{sec2_condition6} are satisfied, and there exists such $p_1, q_1, \sigma_1$, then $v \equiv 0$.
	
	By \eqref{young_con}, we know
	\begin{equation}\label{p1}
		\begin{aligned}
			&A = \frac{\alpha + 2m - 2}{m},\\
			&\frac{1}{p_1} + \frac{1}{q_1} = \frac{m - 1}{m} + \frac{\alpha + 2m - 2}{m(\alpha + 2p)}.
		\end{aligned}
	\end{equation}
Next, we set out to prove
$$
1 - \frac{2}{n} < \frac{1}{p_1} + \frac{1}{q_1} < 1.
$$

Firstly, for the left hand:
	\begin{align*}
		&1 - \frac{2}{n} < \frac{1}{p_1} + \frac{1}{q_1},\\
		\Leftrightarrow ~& \frac{2}{n} - \frac{1}{m} + \frac{\alpha + 2m - 2}{m(\alpha + 2p)} > 0,\\
		\Leftrightarrow ~& \alpha > -2p - n + \frac{n(p + 1)}{m}.
	\end{align*}
	As we choose in the previous section,
	\begin{align*}
		\alpha = \frac{-mn - m + 2n}{n(m - 1) + m}(p - P),
	\end{align*}
	then it means to show
	\begin{align*}
		& \frac{-mn - m + 2n}{n(m - 1) + m}(p - P) > -2p - n + \frac{n(p + 1)}{m}.
	\end{align*}
	Since $m - 1 < p - P < \frac{(m - 1)n + m}{n - m}$, we only need to show that
	\begin{numcases}{}
		\frac{-mn - m + 2n}{n(m - 1) + m}(m - 1 - P) > -2(m - 1),\\
		\frac{-mn - m + 2n}{n(m - 1) + m}\cdot \frac{(m - 1)n + m}{n - m} - \frac{-mn - m + 2n}{n(m - 1) + m}P > \frac{n - 2m}{m}\frac{(m - 1)n + m}{n - m} - n + \frac{n}{m}.\quad \quad
	\end{numcases}
	It is equivalent to be
	\begin{numcases}{}
		\label{P_1} {(m-1)}(mn + m) - (-mn - m + 2n)P > 0,\\	
		\label{P_2}\frac{m^2}{m(n - m)} - \frac{-mn - m + 2n}{n(m - 1) + m}P > 0.
	\end{numcases}

Next, for the right hand:
\\
by (\ref{p1}), it's namely to prove
$$
 \frac{\alpha + 2m - 2 - \alpha - 2p}{m(\alpha + 2p)} < 0.
$$
We only need to show that
\begin{align*}
	\alpha + 2m - 2 > 0,
\end{align*}
then
\begin{align*}
	&\alpha + 2p > \alpha + 2m - 2 > 0,\\
	&p_1 > 0.
\end{align*}
There are three different cases:

	\begin{itemize}
		\item If $\alpha + 2m - 2 > 0$, then we can get desired $p_1, q_1 > 0$.
		\item If $\alpha + 2m - 2 \leq 0$ and $M > 0$, then we have the following Lemma:

	\begin{lemma}
		If we choose the same $\alpha$ as before, then
		\begin{equation}
			\begin{aligned}
				\int_{R^n} v^{\alpha + 2p}\eta^{\delta} \leq CR^{-2m} \int_{B_{2R}\backslash B_R} v^{\alpha + 2m - 2} \eta^{\delta - 2m}.
			\end{aligned}
		\end{equation}
		
	\end{lemma}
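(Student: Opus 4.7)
The plan is to deduce this lemma directly from the already-established inequality \eqref{Y1} through a single application of Young's inequality with suitably chosen conjugate exponents. The $R^{-2m}$ decay paired with the weight $v^{\alpha+2m-2}$ on the target right-hand side is the telltale sign that the factor $R^{-2}v^\alpha|\nabla v|^{2m-2}\eta^{\delta-2}$ appearing on the right of \eqref{Y1} should be split into a product, one factor of which can be absorbed into the $v^{\alpha-2}|\nabla v|^{2m}\eta^\delta$ term on the left of \eqref{Y1}, while the other reproduces $R^{-2m}v^{\alpha+2m-2}\eta^{\delta-2m}$.

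The algebraic identity driving the argument is
\[
R^{-2}\,v^{\alpha}|\nabla v|^{2m-2}\eta^{\delta-2}
= \bigl(v^{\alpha-2}|\nabla v|^{2m}\eta^{\delta}\bigr)^{\frac{m-1}{m}}
\bigl(R^{-2m}\,v^{\alpha+2m-2}\eta^{\delta-2m}\bigr)^{\frac{1}{m}},
\]
which one verifies by matching the exponents of $v$, $|\nabla v|$, $\eta$, and $R$ on the two sides; the choice $(m-1)/m$ is forced by $(|\nabla v|^{2m})^{(m-1)/m}=|\nabla v|^{2m-2}$, after which all the other exponents are determined. Young's inequality with the conjugate pair $\tfrac{m}{m-1}$ and $m$ then yields, for any $\varepsilon>0$,
\[
R^{-2}\,v^{\alpha}|\nabla v|^{2m-2}\eta^{\delta-2}
\leq \varepsilon\,v^{\alpha-2}|\nabla v|^{2m}\eta^{\delta}
+ C_\varepsilon\,R^{-2m}\,v^{\alpha+2m-2}\eta^{\delta-2m}.
\]

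Substituting this pointwise bound into the right-hand side of \eqref{Y1} and integrating over $B_{2R}\setminus B_R$, I would then choose $\varepsilon$ small enough that the resulting $v^{\alpha-2}|\nabla v|^{2m}\eta^\delta$ term is absorbed into the same term appearing on the left of \eqref{Y1}. Dropping the leftover non-negative $v^{\alpha-2}|\nabla v|^{2m}\eta^\delta$ contribution on the left produces precisely the stated inequality. I do not expect a serious obstacle beyond pinning down the correct Young exponents $(m-1)/m$ and $1/m$; once that factorization is in hand, the remainder is a textbook absorption argument built on top of \eqref{Y1}.
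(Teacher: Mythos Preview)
Your proposal is correct and matches the paper's own proof essentially line for line: the paper also starts from \eqref{Y1}, applies Young's inequality with the conjugate pair $\bigl(m,\tfrac{m}{m-1}\bigr)$ to the integrand on the right, and absorbs the resulting $v^{\alpha-2}|\nabla v|^{2m}\eta^\delta$ term into the left-hand side. Your explicit factorization of $R^{-2}v^{\alpha}|\nabla v|^{2m-2}\eta^{\delta-2}$ is exactly what underlies the paper's one-line application of Young.
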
	
		\begin{proof}
			By \eqref{Y1} and Young's inequality , we have
		\begin{align*}
			&\int_{R^n}  v^{\alpha - 2}|\nabla v|^{2m} \eta^{\delta} +  \int_{R^n} v^{\alpha + 2p}\eta^{\delta}\\ 
            &\leq \frac{C}{R^2} \int_{B_{2R}\backslash B_R} v^{\alpha} |\nabla v|^{2m - 2} \eta^{\delta - 2} \\
			&\leq CR^{-2m} \int_{B_{2R}\backslash B_R} v^{\alpha + 2m - 2} \eta^{\delta - 2m} + \frac{1}{2} \int_{B_{2R}\backslash B_R}  v^{\alpha - 2}|\nabla v|^{2m} \eta^{\delta}
		\end{align*}
		with $(m, \frac{m}{m - 1})$.
		So we get that
		\begin{equation*}
			\begin{aligned}
				\int_{R^n} v^{\alpha + 2p}\eta^{\delta} \leq CR^{-2m} \int_{B_{2R}\backslash B_R} v^{\alpha + 2m - 2} \eta^{\delta - 2m}.
			\end{aligned}
		\end{equation*}
		\end{proof}
	
	Besides, we need the following result which comes from Lemma 2.3 in \cite{Serrin1} by Serrin-Zou.
	\begin{lemma}\label{lemma_SZ}
		Suppose $\{ |x| > R > 0\} \subset \Omega $ and $1 < m < n$ . Let $v$ be a positive solution of the inequality
		\begin{align*}
			\Delta_m v \leq 0, x\in \Omega.
		\end{align*}
		Then there exists a constant $C = C(m, n, \min\limits_{|x| = 2R} v, R)  $ such that
		\begin{align*}
			v(x) \geq C|x|^{-\frac{n - m}{m - 1}}, \text{when } |x| > 2R.
		\end{align*}

	\end{lemma}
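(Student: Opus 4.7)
The plan is to construct a barrier from the fundamental solution of the $m$-Laplacian and apply a weak comparison principle on a growing annulus; this is the standard Serrin--Zou strategy for pointwise lower bounds on $m$-superharmonic functions in exterior domains. The exponent $(n-m)/(m-1)$ in the lemma is exactly the decay rate of this fundamental solution, which strongly suggests that no other function can serve as the comparison.

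First I would recall that for $1 < m < n$ the radial profile $\Phi(r) = r^{-(n-m)/(m-1)}$ is $m$-harmonic on $(0,\infty)$: a direct calculation shows $(r^{n-1}|\Phi'|^{m-2}\Phi')' \equiv 0$, so $\Delta_m \Phi \equiv 0$ in $\mathbb{R}^n\setminus\{0\}$. Next, for each $\rho > 2R$, I would set
\[
\Psi_\rho(x) = |x|^{-(n-m)/(m-1)} - \rho^{-(n-m)/(m-1)},
\]
which remains $m$-harmonic in the annulus $A_\rho := \{2R < |x| < \rho\}$ and vanishes on the outer sphere $|x|=\rho$. With the normalization $C_0 := (2R)^{(n-m)/(m-1)}\min_{|x|=2R} v$, on the inner boundary one obtains $C_0\Psi_\rho \le C_0 (2R)^{-(n-m)/(m-1)} = \min_{|x|=2R} v \le v$, while on the outer boundary $C_0\Psi_\rho = 0 \le v$ since $v>0$.

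The central step is then the weak comparison principle for $\Delta_m$: because $\Delta_m v \le 0 = \Delta_m(C_0\Psi_\rho)$ weakly in $A_\rho$ and $v \ge C_0\Psi_\rho$ on $\partial A_\rho$, subtracting the two weak formulations and testing with $(C_0\Psi_\rho - v)_+ \in W^{1,m}_0(A_\rho)$, combined with the strict monotonicity of $\xi\mapsto |\xi|^{m-2}\xi$, forces $(C_0\Psi_\rho - v)_+ \equiv 0$, hence $v \ge C_0\Psi_\rho$ throughout $A_\rho$. Letting $\rho\to\infty$ makes the subtracted constant $\rho^{-(n-m)/(m-1)}$ vanish and yields $v(x) \ge C_0 |x|^{-(n-m)/(m-1)}$ for $|x|>2R$, with $C = C_0$ depending only on $m$, $n$, $R$, and $\min_{|x|=2R} v$.

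The main technical obstacle is that $v$ is only a $C^1$ weak supersolution, so the classical strong maximum principle does not directly apply; one has to carry out the comparison in the Sobolev sense and check that $(C_0\Psi_\rho - v)_+$ is an admissible test function on the smooth bounded annulus. For positive $v$ and the degenerate elliptic operator $\Delta_m$ with $m>1$ this is by now standard (Tolksdorf, Pucci--Serrin, and of course \cite{Serrin1} itself); once the weak comparison is granted, the remaining arguments reduce to the elementary boundary inequalities above and the trivial limit $\rho\to\infty$.
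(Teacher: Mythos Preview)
Your argument is correct and is exactly the standard barrier/comparison proof underlying the cited Serrin--Zou result. Note, however, that the paper does not supply its own proof of this lemma: it is stated with the remark that it ``comes from Lemma 2.3 in \cite{Serrin1} by Serrin--Zou'' and is used as a black box. So there is no in-paper proof to compare against; your sketch simply fills in what \cite{Serrin1} does, via the $m$-harmonic fundamental solution, the subtracted-constant barrier on annuli, the weak comparison principle, and the limit $\rho\to\infty$.
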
	
	Therefore, if $M > 0$, we know that $\Delta_m v \leq 0$, then
	\begin{align*}
		v(x) \geq CR^{-\frac{n - m}{m - 1}}.
	\end{align*}
	Thus we get that
	\begin{align*}
		\int_{R^n} v^{\alpha + 2p}\eta^{\delta} &\leq CR^{-2m} \int_{B_{2R}\backslash B_R}v^{\alpha + 2m - 2} \eta^{\delta - 2m}\\
		&\leq C R^{-2m + n - (\alpha + 2m - 2)\cdot \frac{n - m}{m - 1}}.
	\end{align*}
	We hope that
	\begin{align*}
		-2m + n - (\alpha + 2m - 2)\cdot \frac{n - m}{m - 1} < 0.
	\end{align*}
	
	If $-mn - m + 2n \geq 0$, then $\alpha \geq 0$ and $\alpha + 2m - 2 > 0$. This is the first case. So we only need to consider the case that $-mn - m + 2n < 0$:

Since
	\begin{align*}
		&\alpha + 2m - 2\\
		= & \frac{-mn - m + 2n}{n(m - 1) + m}p - \frac{-mn - m + 2n}{n(m - 1) + m} P + 2m - 2\\
		\geq &\frac{-mn - m + 2n}{n - m} - \frac{-mn - m + 2n}{n(m - 1) + m} P + 2m - 2\\
		=&  \frac{-mn - m + 2n + 2mn - 2m^2 - 2n + 2m}{n - m} - \frac{-mn - m + 2n}{n(m - 1) + m} P\\
		=&  \frac{m(n + 1 - 2m)}{n - m} - \frac{-mn - m + 2n}{n(m - 1) + m}P,\\
	\end{align*}
then
	
	\begin{align}\label{P_3}
		&-2m + n - (\alpha + 2m - 2)\cdot \frac{n - m}{m - 1}\nonumber\\
		\leq&  -2m + n - \frac{m(-2m + n + 1)}{n - m}\cdot \frac{n - m}{m - 1} +  \frac{-mn - m + 2n}{n(m - 1) + m}P\cdot  \frac{n - m}{m - 1}\nonumber \\
		=&  -2m + n - \frac{m(-2m + n + 1)}{m - 1} +  \frac{-mn - m + 2n}{n(m - 1) + m}P\cdot  \frac{n - m}{m - 1} \nonumber\\
		=& \frac{-2m^2 + 2m + mn - n + 2m^2 - mn - m}{m - 1} +  \frac{-mn - m + 2n}{n(m - 1) + m}P\cdot  \frac{n - m}{m - 1} \nonumber\\
		=&\frac{m - n}{m - 1} +  \frac{-mn - m + 2n}{n(m - 1) + m}P\cdot  \frac{n - m}{m - 1} \nonumber\\
		=& \frac{m - n}{m - 1}\left[ 1 - \frac{-mn - m + 2n}{n(m - 1) + m}P \right].
	\end{align}
	
	In conclusion,  it follows by (\ref{P_1}), (\ref{P_2}) and (\ref{P_3}) that if 
$$M > 0, 1 < m < n, m - 1 < p - P < \frac{(m - 1)n + m}{n - m}, -P_m < P < P_m,$$ where
	\begin{align*}
		P_m = \min \Big\{ \frac{(mn + m)(m-1)}{|-mn - m + 2n|}, \frac{m^2}{m(n - m)}\left[ \frac{|-mn - m + 2n|}{n(m - 1) + m}\right]^{-1}, \frac{n(m - 1) + m}{|-mn - m + 2n|} \Big \},
	\end{align*}
	 then we can deduce by \eqref{sub4_eq1} that
	\begin{align*}
		v \equiv 0.
	\end{align*}
	\item If $\alpha + 2m - 2 \leq 0$ and $M < 0$. The condition $\Delta_m v \leq 0$ does not hold any more and we can not use Lemma \ref{lemma_SZ} directly. But inspired by \cite{F-Y-Y}, if we define $v = w^{\sigma}$ with $\sigma > 1$, we can get that $\Delta_m w \leq 0$ under some conditions for $p, P, m, M$.
	\begin{align*}
		-\Delta_m w &= (\sigma - 1)(m - 1)w^{-1}|\nabla w|^m + \sigma^{1 - m}w^{m + \sigma(p - m + 1) - 1} \\
		&+ M\sigma^{q - m + 1}w^{(\sigma - 1)(q - m + 1)}|\nabla w|^q,
	\end{align*}
	and then setting $z = |\nabla v|^m$ yields
	\begin{align*}
		-\Delta_m w = \sigma^{1 - m}w^{-1}\Psi(z),
	\end{align*}
	where
	\begin{align*}
		\Psi(z) = \sigma^{m - 1} (\sigma - 1)(m - 1)z + M\sigma^{q}w^{(\sigma - 1)(q - m + 1) + 1}z^{\frac{q}{m}} + w^{m + \sigma(p - m + 1) } .
	\end{align*}
	Since $q = \frac{mp}{p + 1}$, it is easy to see that $\Psi(z)$ achieves its minimum at
	\begin{align*}
		z_0 = \left[ \frac{|M| ~p~\sigma^{1 - \frac{m}{p + 1}}}{(\sigma - 1)(m - 1)(p + 1)} \right]^{p + 1}v^{m + \sigma(p - m + 1)},
	\end{align*}
	and
	\begin{align*}
		\Psi(z_0) = \left[1 - \left(\frac{|M|}{p + 1}\right)^{p + 1} \frac{(\sigma p)^p}{(\sigma - 1)^p(m - 1)^p} \right]v^{m + \sigma (p - m + 1)}.
	\end{align*}
	If
	\begin{equation}
		1 - \left(\frac{|M|}{p + 1}\right)^{p + 1} \frac{(\sigma p)^p}{(\sigma - 1)^p(m - 1)^p} \geq 0,
	\end{equation}
	then we know that $\Delta_m w \geq 0$ and by Lemma \ref{lemma_SZ} we get that
	\begin{align*}
		v = w^{\sigma} \geq CR^{-\frac{n - m}{m - 1}\sigma}.
	\end{align*}
	Thus we hope the follow conditions will hold at the same time:
	\begin{numcases}{}
		\label{negative1} -2m + n - (\alpha + 2m - 2)\cdot \frac{n - m}{m - 1}\sigma < 0,\\
		\label{negative2}1 - \left(\frac{|M|}{p + 1}\right)^{p + 1} \cdot\frac{(\sigma p)^p}{(\sigma - 1)^p(m - 1)^p} \geq 0.
	\end{numcases}
	
	Observe that
	\begin{align*}
		&-2m + n - (\alpha + 2m - 2)\cdot \frac{n - m}{m - 1}\sigma\\
		=&\frac{m - n}{m - 1}\left[1 + \frac{m(-2m + n + 1)}{n - m}(\sigma - 1) \right] - \frac{m - n}{m - 1}\cdot\frac{-mn - m + 2n}{n(m - 1) + m}P \sigma.
	\end{align*}
	If $-2m + n + 1 \geq 0$, we choose $\sigma = 2$; if $-2m + n + 1 < 0$, we choose
	\begin{align*}
		\frac{m(-2m + n + 1)}{n - m}(\sigma - 1) = -\frac{1}{2}.
	\end{align*}
	Then there exists $M_3 = M_3(n, m, p) > 0$, such that if $-M_3 < M < 0$, then the condition \eqref{negative2} hold.
	Besides, for condition \eqref{negative1} we have
	\begin{align}\label{P_4}
		& -2m + n - (\alpha + 2m - 2)\cdot \frac{n - m}{m - 1}\sigma \nonumber\\
		\leq &\frac{m - n}{m - 1}\cdot \frac{1}{2} - \frac{m - n}{m - 1}\cdot\frac{-mn - m + 2n}{n(m - 1) + m}P \sigma \nonumber\\
		=& \frac{m - n}{m - 1}\cdot \left[  \frac{1}{2} - \frac{-mn - m + 2n}{n(m - 1) + m}P \sigma\right]. \nonumber\\
	\end{align}

	In conclusion, combined with  (\ref{P_1}), (\ref{P_2}) and (\ref{P_4}), if 
$$-M_3 < M < 0, 1 < m < n, m - 1 < p - P < \frac{(m - 1)n + m}{n - m}, -P_m < P < P_m,$$ where
	\begin{align*}
		P_m = \min \Big\{ \frac{(mn + m)(m-1)}{|-mn - m + 2n|}, \frac{m^2}{m(n - m)}\left[ \frac{|-mn - m + 2n|}{n(m - 1) + m}\right]^{-1},  \frac{n(m - 1) + m}{2\sigma |-mn - m + 2n|} \Big \},
	\end{align*}
	 then we can deduce by \eqref{sub4_eq1} that
	\begin{align*}
		v \equiv 0.
	\end{align*}
	
	\end{itemize}

	\section{Case $M > 0$ and $M < 0$}
In this section, we are working on proving \eqref{sec2_condition1} - \eqref{sec2_condition6} hold, and talk about them separately according to the sign of $M$.
\\
{\bf Case 1: $M > 0$}

	\begin{theorem}\label{thm1}
		Let $n \geq 2$, $m - 1 < p \leq \frac{(m - 1)n + m}{n - m}$, $q = \frac{mp}{p + 1}$, then there exists $M_1 > 0$, which depends on $n, m, p$, such that for any $0 < M < M_1$, all the nonnegative solutions of \eqref{main equation} are $v \equiv 0$.
	\end{theorem}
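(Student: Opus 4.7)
The proof proposal follows the framework of Sections 2--4: derive the inequality \eqref{sub4_eq1} by choosing the four auxiliary parameters $(\alpha, T, U, P)$ so that \eqref{sec2_condition1}--\eqref{sec2_condition6} all hold simultaneously, then invoke the cut-off and Young's inequality machinery of Section 4 to conclude $v \equiv 0$. Following Section 3, I would fix
$$\alpha = \frac{-mn - m + 2n}{n(m-1)+m}\,(p-P)$$
with $m-1 < p-P < \frac{(m-1)n+m}{n-m}$, which is precisely the choice that kills \eqref{sec2_condition1}. I keep $P \geq 0$ small, setting $P = 0$ when $p$ is strictly subcritical and taking $P$ slightly positive at the critical Sobolev endpoint so that $p - P$ remains in the open interval.

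Next, I would pick $U$ slightly larger than $1$ so that the quadratic in \eqref{sec2_condition3} is positive; feasibility here reduces to the inequality $\frac{(n-1)q}{n(m-1)+q} < 1$, which one checks directly from $q = mp/(p+1)$ and the admissible range of $p$. With $U$ fixed, I take $T > 0$ small enough so that \eqref{sec2_condition2} is immediate, $T + U > 1$ gives \eqref{sec2_condition6} (using $M > 0$), and $P \leq -(\alpha + p)T$ can be arranged to give \eqref{sec2_condition4}. The remaining condition \eqref{sec2_condition5} depends continuously on $(T, U, P, M)$; at the reference point $T = 0$, $P = 0$, $U \to 1^{+}$ its prefactor simplifies to a rational function in $m, n, p$ whose nonnegativity I would verify on the interval $m - 1 < p \leq \frac{(m-1)n+m}{n-m}$, after which a small perturbation argument keeps the inequality strict for the working values of $T, U, P$.

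Once \eqref{sub4_eq1} is in force, the cut-off argument of Section 4 delivers $v \equiv 0$ provided either $\alpha + 2m - 2 > 0$ (so that the exponents $p_1, q_1, \sigma_1$ of \eqref{young_con} and \eqref{parameter5} can be produced), or in the residual case $\alpha + 2m - 2 \leq 0$ provided $P < P_m$, as recorded in \eqref{P_3}. Here the sign hypothesis $M > 0$ is essential: it forces $\Delta_m v \leq 0$, so that Lemma \ref{lemma_SZ} supplies the pointwise lower bound $v(x) \geq C|x|^{-(n-m)/(m-1)}$, which is what allows \eqref{P_3} to produce the decaying upper bound $R^{-2m + n - (\alpha + 2m - 2)\frac{n-m}{m-1}} \to 0$. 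The main obstacle I anticipate is the algebraic verification of \eqref{sec2_condition5} up to the critical Sobolev exponent, which is where the restriction to small $M$ enters; once this check is in hand, $M_1$ is defined as the supremum of those $M > 0$ for which all six conditions remain strictly satisfied, and the theorem follows.
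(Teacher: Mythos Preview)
Your proposal has two genuine gaps.

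\textbf{Condition \eqref{sec2_condition3} cannot always be met.} You reduce the solvability of \eqref{sec2_condition3} to the discriminant condition $\frac{(n-1)q}{n(m-1)+q}<1$, i.e.\ $(n-2)q<n(m-1)$, and assert this follows from the admissible range of $p$. It does not. For $n\ge 3$ and $1<m<\frac{2n}{n+2}$ one can take $p$ close to the Sobolev exponent (so $q$ close to $m-1+\frac{m}{n}$) and the inequality fails; e.g.\ $n=3$, $m=1.1$, $p=0.7$ gives $q\approx 0.453>0.3=\frac{n(m-1)}{n-2}$. The paper's own Claim~\ref{claim2} (in the proof of Theorem~\ref{thm2}) confirms that $\Delta>0$ is only established for $m\ge\frac{n}{2}+\frac14$. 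The paper's proof of Theorem~\ref{thm1} therefore does \emph{not} try to satisfy \eqref{sec2_condition3}; instead it fixes $U=1+\frac{1}{mn}$, proves \eqref{sec2_condition5} directly via Claim~\ref{claim_1}, and when \eqref{sec2_condition3} fails it applies the Young inequality \eqref{Young1} to trade the $M^{2}v^{\alpha}|\nabla v|^{2q}$ term against the $|M|v^{\alpha-1}|\nabla v|^{m+q}$ and $|M|^{p+2}v^{\alpha+p}|\nabla v|^{q}$ terms. It is precisely this Young step that produces the smallness threshold $M_{1}$ --- not \eqref{sec2_condition5}, contrary to what you anticipate.

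\textbf{The sign of $P$ is inconsistent with \eqref{sec2_condition4}.} With your choice of $\alpha$ one has $\alpha+p=\frac{n(p-P)}{n(m-1)+m}+P>0$ for $P$ small, so \eqref{sec2_condition4} reads $P\le -(\alpha+p)T<0$ whenever $T>0$. Your stipulation $P\ge 0$ together with $T>0$ is therefore incompatible with \eqref{sec2_condition4}. The paper takes $P$ slightly \emph{negative} in the subcritical case (with $T=-P/(\alpha+p)>0$), and only takes $P>0$ at the critical endpoint, where it invokes the additional Young inequality \eqref{Young2} to absorb the resulting wrong-signed $v^{\alpha+p-1}|\nabla v|^{m}$ term.
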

	
	\begin{proof}[Proof of Theorem \ref{thm1}]
		In this proof, we choose $P = T = 0$ at first and then
		\begin{align*}
			\alpha &= \frac{-mn - m + 2n}{n(m - 1) + m}p,\\
			\alpha + p &= \frac{np}{n(m - 1) + m} > 0,
		\end{align*}
		then the condition \eqref{sec2_condition1} holds for $m - 1 < p < \frac{(m - 1)n + m}{n - m}$. And the condition \eqref{sec2_condition5} becomes that:
		\begin{align*}
			0 &\leq \alpha + p  - \frac{n(m - 1)\alpha U}{n(m - 1) + q} + \frac{(n - 1)^2}{2n^2}\Bigg[ {n\alpha\over n-1}- \frac{nmp}{n(m-1)+m} \Bigg]\cdot \frac{nqU}{n(m - 1) + q}\\
			&= \frac{np}{n(m - 1) + m}  - \frac{-mn - m + 2n}{n(m - 1) + m}p\cdot \frac{n(m - 1) U}{n(m - 1) + q} \\
			&+ \frac{(n - 1)^2}{2n^2}\Bigg[ {n\over n-1}\cdot \frac{-mn - m + 2n}{n(m - 1) + m}p - \frac{nmp}{n(m-1)+m} \Bigg]\cdot \frac{nqU}{n(m - 1) + q}\\
			&= \frac{np}{n(m - 1) + m} + \frac{np U}{(nm - n + q)(nm - n + m)} \Bigg[ (mn + m - 2n)(m - 1) \\
			&+ \frac{(n - 1)^2q}{2n} \left( \frac{-mn - m + 2n}{n - 1} - m \right) \Bigg]\\
			&= \frac{np}{nm - n + m} + \frac{np(m - 1) U}{(nm - n + q)(nm - n + m)} \Bigg[ mn + m - 2n - (n - 1)q \Bigg]\\
            &=\frac{np}{(nm - n + q)(nm - n + m)}\Big[nm - n + q +(m - 1)U\big( mn + m - 2n - (n - 1)q  \big)\Big].
		\end{align*}
		Define
		\begin{align*}
			f_1(m, q) := nm - n + q + (m - 1) \left(1 + \frac{1}{mn}\right) \Bigg[ mn + m - 2n - (n - 1)q \Bigg].
		\end{align*}
		\begin{claim}\label{claim_1}
			For any $m - 1 < q \leq m - 1 + \frac{m}{n}$, $1 < m < n$, we have
			\begin{align*}
				f_1(m, q) > 0.
			\end{align*}
		\end{claim}
\begin{proof}[Proof of claim \ref{claim_1}]
			We only need to show that
			\begin{align*}
				f_1(m, m - 1)\quad \mathrm{and}\quad f_1\left(m, m - 1 + \frac{m}{n}\right) > 0.
			\end{align*}
			In fact, we have
			\begin{align*}
				f_1(m, m - 1) &= nm - n + m - 1 + \left(1 + \frac{1}{mn}\right)(m - 1) \Bigg[ mn + m - 2n - (n - 1)(m - 1) \Bigg]\\
				&= (m - 1)\left(1 + \frac{1}{mn}\right)\Bigg[ n + 1 - \frac{n + 1}{mn + 1} + mn + m - 2n - (n - 1)(m - 1) \Bigg]\\
				&= (m - 1)\left(1 + \frac{1}{mn}\right)\left( 2m - \frac{n + 1}{mn + 1}\right)  > 0,
			\end{align*}
			and
			\begin{align*}
				&f_1\left(m, m - 1 + \frac{m}{n}\right)\\
				&= nm - n + m - 1 + \frac{m}{n} + (m - 1) \left(1 + \frac{1}{mn}\right)\Bigg[ mn + m - 2n - (n - 1)\left(m - 1 + \frac{m}{n}\right) \Bigg]\\
				&= \frac{m^2n^3 - mn^3 + m^2n^2 - mn^2 + m^2n + (m - 1)(mn + 1)\Big[ mn^2 + mn - 2n^2 - (n - 1)\left(mn - n + m\right) \Big]}{mn^2}\\
				&= \frac{m^2n^3 - mn^3 + m^2n^2 - mn^2 + m^2n + (m^2n + m - mn - 1) \Big[ mn - n^2  - n + m\Big]}{mn^2}>0.
			\end{align*}
			Define
			\begin{align*}
				f(n)&:=m^2n^3 - mn^3 + m^2n^2 - mn^2 + m^2n + (m^2n + m - mn - 1) \Big[ mn - n^2  - n + m\Big]\\
				&= m^2n^3 - mn^3 + m^2n^2 - mn^2 + m^2n + m^3n^2 - m^2n^3 - m^2n^2 + m^3n + m^2n - mn^2 - mn + m^2\\
				&- m^2n^2 + mn^3 + mn^2 - m^2n - mn + n^2 + n - m\\
				&=  m^3n^2  + m^3n + m^2n - mn^2 - 2mn + m^2 - m^2n^2 + n^2 + n - m\\
				&= (m^3 - m^2 - m + 1)n^2 + (m^3 + m^2 - 2m + 1)n + m^2 - m ,
			\end{align*}
		it can be seen that 
	\begin{align*}
				f'(n)&=2n (m^3 - m^2 - m + 1)+ m^3+ m^2  -2 m+1\\
				&> 2m (m^3 - m^2 - m + 1)+ m^3+ m^2  -2 m+1\\
				&= 2m^4  - m^3-m^2+1\\
				&=  m^2 ( m - 1)( 2m + 1)+1>0.
			\end{align*}
		so $f(n)>f(m)=m^5>0	.$

		\end{proof}
		Since $m - 1 < p \leq \frac{(m - 1)n + m}{n - m}$, and $q={mp\over p+1}$, then we obtain the range of $q$, that is
$$
m - 1 < q \leq m - 1 + \frac{m}{n}.
$$ Therefore, if we choose $U = 1 + \frac{1}{mn}$, then the condition \eqref{sec2_condition5} and \eqref{sec2_condition6} hold. That is if we choose
		\begin{align*}
			P &= T =  0,\\
			\alpha &= \frac{-mn - m + 2n}{n(m - 1) + m}(p - P),\\
			U &= 1 + \frac{1}{mn},
		\end{align*}
		then for any $m - 1 < p < \frac{(m - 1)n + m}{n - m}$, we have
		\begin{align*}
			\alpha + p  - \frac{n(m - 1)\alpha U}{n(m - 1) + q} + \frac{(n - 1)^2}{2n^2}\Bigg[ {n\alpha\over n-1}- \frac{nmp}{n(m-1)+m} \Bigg]\cdot \frac{nqU}{n(m - 1) + q} > 0,
		\end{align*}
		 and the conditions \eqref{sec2_condition1} and \eqref{sec2_condition6} hold.
It remains to show the condition  \eqref{sec2_condition3} hold, while if
\begin{align*}
  U - 1 - \frac{(n - 1)^2}{4n^2}\left[ \frac{nqU}{n(m - 1) + q}\right]^2< 0,
\end{align*}
	 then by Young's inequality, it follows that
		\begin{equation}\label{Young1}
			\begin{aligned}
				M^2 v^{\alpha}|\nabla v|^{2q} \leq K|M| v^{\alpha - 1}|\nabla v|^{m + q} + \frac{\Big(K\cdot \frac{p + 1}{p}\Big)^{-p}}{p + 1} |M|^{p + 2} v^{\alpha + p}|\nabla v|^q,
			\end{aligned}
		\end{equation}
		with $(\frac{p + 1}{p}, p + 1)$, by (\ref{last equation}) we only need 
	\begin{numcases}{}
		\label{sec2_condition} \alpha + p  - \frac{n(m - 1)\alpha U}{n(m - 1) + q} + \frac{(n - 1)^2}{2n^2}\Big[ {n\alpha\over n-1}- \frac{nmp}{n(m-1)+m} \Big]\cdot \frac{nqU}{n(m - 1) + q} \\
+K\Big(U - 1 - \frac{(n - 1)^2}{4n^2}\left[ \frac{nqU}{n(m - 1) + q}\right]^2\Big) =0,\\
		\label{sec2_condition}  U-1+\frac{(K{p+1\over p})^{-p}}{p+1}|M|^{p + 1} \Big( U - 1 - \frac{(n - 1)^2}{4n^2}\left[ \frac{nqU}{n(m - 1) + q}\right]^2 \Big)>0,
	\end{numcases}
then
\begin{align*}
&U - 1 - \frac{p^p}{(p+1)^{p+1}}|M|^{p + 1} \Big(- U + 1 + \frac{(n - 1)^2}{4n^2}\left[ \frac{nqU}{n(m - 1) + q}\right]^2 \Big)^{p+1}\\
&\cdot \Bigg[\alpha + p  - \frac{n(m - 1)\alpha U}{n(m - 1) + q} + \frac{(n - 1)^2}{2n^2}\Bigg[ {n\alpha\over n-1}- \frac{nmp}{n(m-1)+m} \Bigg]\cdot \frac{nqU}{n(m - 1) + q}\Bigg]^{-p}>0.
\end{align*}
Since $U$ has been chosen as before, by the inequality above, we can get $M<M_1$. Thus we can say there exists $M_1>0$, such that for any $0<M<M_1$, all the inequality conditions hold. Therefore, by continuity, when we choose $$-\varepsilon < P < 0, ~(\alpha + p)T + P = 0,$$ such that
\begin{align*}
	p - P <\frac{(m - 1)n + m}{n - m},
\end{align*}
then conditions \eqref{sec2_condition1} - \eqref{sec2_condition6} all hold.
Finally we explain the critical case $p = \frac{(m-1)n + m}{n - m}$. For fixed $M > 0$, we can still choose $0 < P < \varepsilon$, where $\varepsilon > 0$ is much smaller than $M$. Thus using the following Young's inequality, we can get the same result as $p < \frac{(m-1)n + m}{n - m}$:
	
		\begin{equation}\label{Young2}
			\begin{aligned}
				v^{\alpha + p - 1}|\nabla v|^m \leq \frac{q}{m}\left(J|M| \cdot \frac{m}{m - q}\right)^{-\frac{m - q}{q}} v^{\alpha + p}|\nabla v|^q + J|M| v^{\alpha - 1}|\nabla v|^{m + q}
			\end{aligned}
		\end{equation}
		with $(\frac{m}{q}, \frac{m}{m - q})$.

%By \eqref{Young1} and \eqref{Young2},  there exists $M_1 > 0$, such that for any $0 < M < M_1$, the conditions  \eqref{sec2_condition3} and \eqref{sec2_condition4} hold.
	\end{proof}

{\bf Case 2: $M <0$}
\\	
	\begin{theorem}\label{thm2}
		Let $n \geq 2$, $m - 1 < p < \frac{(m - 1)n + m}{n - m}$, $q = \frac{mp}{p + 1}$, then there exists $M_2 > 0$, which depends on $n, m, p$, such that for any $-M_2 < M < 0$, all the nonnegative solutions of \eqref{main equation} are $v \equiv 0$.
	\end{theorem}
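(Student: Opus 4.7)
\noindent\textbf{Proof proposal for Theorem \ref{thm2}.}
The plan is to run exactly the same framework as in Theorem \ref{thm1}: produce parameters $P, T, U$ (depending only on $n, m, p$) and a threshold $M_{2}>0$ such that conditions \eqref{sec2_condition1}--\eqref{sec2_condition6} together with \eqref{young_con} all hold for every $M \in (-M_{2}, 0)$, and then extract $v \equiv 0$ from \eqref{sub4_eq1} via the cut-off argument of Section~4.

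I would begin with the canonical choice $P = T = 0$, $\alpha = \tfrac{-mn-m+2n}{n(m-1)+m}p$, $U = 1 + \tfrac{1}{mn}$ used in Theorem \ref{thm1}; this automatically delivers \eqref{sec2_condition1} throughout the subcritical range. The genuinely new difficulty is the sign of $M$: Claim \ref{claim_1} together with the computation in the proof of Theorem \ref{thm1} shows that the bracket in \eqref{sec2_condition5} and the factor $T+U-1$ in \eqref{sec2_condition6} are both strictly positive for this choice, whereas \eqref{sec2_condition3} is insensitive to $\operatorname{sgn} M$. Consequently, both \eqref{sec2_condition5} and \eqref{sec2_condition6} fail once $M<0$, and the offending terms $\{\cdots\}M\,v^{\alpha-1}|\nabla v|^{m+q}$ and $(T+U-1)M\,v^{\alpha+p}|\nabla v|^{q}$ must be reabsorbed into the positive contributions furnished by \eqref{sec2_condition1}--\eqref{sec2_condition3}.

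For the absorption I would insert two Young inequalities with a free parameter $\varepsilon>0$, the first of the form
\[
|M|\,v^{\alpha-1}|\nabla v|^{m+q} \leq \varepsilon\,v^{\alpha-2}|\nabla v|^{2m} + \frac{M^{2}}{4\varepsilon}\,v^{\alpha}|\nabla v|^{2q},
\]
splitting the bad cross term between the $v^{\alpha-2}|\nabla v|^{2m}$ coefficient (positive by \eqref{sec2_condition1}) and the $M^{2}v^{\alpha}|\nabla v|^{2q}$ coefficient (positive by \eqref{sec2_condition3}), and the second splitting $|M|\,v^{\alpha+p}|\nabla v|^{q}$ between $v^{\alpha+2p}$ and $v^{\alpha}|\nabla v|^{2q}$ in a similar way. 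The key observation is that the $|M|$-dependence cancels in each absorption, so feasibility reduces to $M$-independent algebraic inequalities of the form $4C_{1}C_{3}>1$ between the positive coefficients produced by \eqref{sec2_condition1}--\eqref{sec2_condition3}. I would expect to check these by a computation in the spirit of Claim \ref{claim_1}, after first perturbing $T$ to a small positive value via the relation $(\alpha+p)T+P=0$, exactly as in the last step of Theorem \ref{thm1}, so that \eqref{sec2_condition2} and \eqref{sec2_condition4} become strict.

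Once \eqref{sec2_condition1}--\eqref{sec2_condition6} are in force, the cut-off argument of Section~4 applies verbatim through \eqref{sub4_eq1}. The only new ingredient is the replacement for Lemma \ref{lemma_SZ} in the exceptional case $\alpha + 2m - 2 \leq 0$: for $M<0$ the inequality $\Delta_{m} v \leq 0$ no longer comes for free, so I would use the substitution $v = w^{\sigma}$ set up in Section~4, which produces $\Delta_{m} w \leq 0$ precisely when \eqref{negative2} holds, and this forces $|M|$ to be small. The final threshold $M_{2}$ is then the minimum of the bound from \eqref{negative2} and any slack needed to keep the Young absorption nondegenerate. The main technical obstacle I anticipate is verifying the $M$-independent algebraic inequalities uniformly on $m-1<p<\tfrac{(m-1)n+m}{n-m}$; this is presumably what also forces the critical exponent $p = \tfrac{(m-1)n+m}{n-m}$ to be excluded here, in contrast to Theorem \ref{thm1}, where the critical case was rescued by the auxiliary Young inequality \eqref{Young2}.
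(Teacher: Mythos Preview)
Your overall framework matches the paper's, but the absorption step has a real gap. You take for granted that with $U = 1 + \tfrac{1}{mn}$ the coefficient in \eqref{sec2_condition3} is positive, so that the bad terms coming from \eqref{sec2_condition5} and \eqref{sec2_condition6} can be split by Young's inequality between \eqref{sec2_condition1} and \eqref{sec2_condition3}. This is not so: even in Theorem~\ref{thm1} the positivity of \eqref{sec2_condition3} was \emph{not} established for that $U$; it was handled there by the auxiliary inequality \eqref{Young1}. More seriously, the quadratic $U \mapsto U - 1 - \tfrac{(n-1)^2}{4n^2}\bigl[\tfrac{nqU}{n(m-1)+q}\bigr]^{2}$ has positive values for some $U>0$ only when $n(m-1) > (n-2)q$; near the Sobolev exponent this amounts to $m > \tfrac{2n}{n+2}$, which fails on a nonempty subrange of $1<m<n$ for every $n\ge 3$. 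In that regime \emph{no} choice of $U$ makes \eqref{sec2_condition3} hold, and your $M$-independent absorption has no target. A secondary issue is that your splitting of the \eqref{sec2_condition6} term feeds into the $v^{\alpha+2p}$ coefficient $T$, which you have just declared to be a small perturbation; the resulting constraint $(T+U-1)^2 < 4TC_3$ then forces $T$ bounded away from zero, in tension with that picture.

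The paper circumvents all this with a dichotomy on $m$. For $m \le \tfrac{n}{2} + \tfrac14$ one has $mn + m - 2n - (n-1)q \le 2m - n - 1 \le -\tfrac12$, so taking $U$ large drives the bracket in \eqref{sec2_condition5} negative; with $M<0$ this makes \eqref{sec2_condition5} itself a good term, and the failing conditions \eqref{sec2_condition3}, \eqref{sec2_condition6} are then absorbed via \eqref{Young1} and \eqref{Young4} at the cost of higher powers of $|M|$, which is precisely what produces the threshold $M_2$. For $m \ge \tfrac{n}{2} + \tfrac14$ a separate computation (Claim~\ref{claim2}) shows the discriminant above is positive, so some $U_0$ does render \eqref{sec2_condition3} positive; then \eqref{sec2_condition5}, \eqref{sec2_condition6} are absorbed via \eqref{Young3} and \eqref{Young4}. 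In both branches the absorption is necessarily $M$-dependent, so your hope for $M$-independent feasibility conditions cannot be realized uniformly across the stated range of $(m,n,p)$.
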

	\begin{proof}[Proof of Theorem \ref{thm2}]
		We still choose
		\begin{align*}
			\alpha = \frac{-mn - m + 2n}{n(m - 1) + m}(p - P).
		\end{align*}
		There are two different cases:
		\begin{itemize}
			\item $1 < m \leq \frac{n}{2} + \frac{1}{4}$:
\\
			In this case, we know
			\begin{align*}
				& \alpha + p - P  - \frac{n(m - 1)\alpha U}{n(m - 1) + q} + \frac{(n - 1)^2}{2n^2}\Bigg[ {n\alpha\over n-1}- \frac{nm(p - P)}{n(m-1)+m} \Bigg]\cdot \frac{nqU}{n(m - 1) + q}\\
			&= \frac{n(p - P)}{nm - n + m} + \frac{n(p - P)(m - 1) U}{(nm - n + q)(nm - n + m)} \Bigg[ mn + m - 2n - (n - 1)q \Bigg],\\
			\end{align*}
			and
			\begin{align*}
				&mn + m - 2n - (n - 1)q\\
				&< mn + m - 2n - (n - 1)(m - 1)\\
				&= 2m - n - 1\leq - \frac{1}{2} .
			\end{align*}
			So we can choose $U > 0$, such that
			\begin{align*}
				\alpha + p  - \frac{n(m - 1)\alpha U}{n(m - 1) + q} + \frac{(n - 1)^2}{2n^2}\Bigg[ {n\alpha\over n-1}- \frac{nmp}{n(m-1)+m} \Bigg]\cdot \frac{nqU}{n(m - 1) + q} < 0.
			\end{align*}
			Besides, for any $m - 1 < p - P < \frac{ nm - n + m}{n - m}$, we have
			\begin{align*}
				&-(p - P)(\alpha-1)\frac{n(m-1)}{n(m-1)+m} - \frac{(n - 1)^2}{4n^2}\Bigg[ {n\alpha\over n-1}- \frac{nm(p - P)}{n(m-1)+m} \Bigg]^2\\
				&= \frac{p - P}{(nm - n + m)^2}(nm - n)\Bigg[ nm - n + m - (n - m)(p - P) \Bigg] > 0.
			\end{align*}
			So for any $m - 1 < p < \frac{ nm - n + m}{n - m}$, there always exists $P_0$, such that $-P_m < P_0 < 0$ and
			\begin{align*}
				&p - P_0 < \frac{ nm - n + m}{n - m},\\
				&\alpha + p > 0.
			\end{align*}
			Choose
			\begin{align*}
				T = \frac{-0.5P_0}{\alpha + p} > 0,
			\end{align*}
			then the conditions \eqref{sec2_condition1}, \eqref{sec2_condition2}, \eqref{sec2_condition4} and \eqref{sec2_condition5} all hold.
It remains to show \eqref{sec2_condition3} and \eqref{sec2_condition6}. If they don't hold, we can use Young's inequality,
		%\begin{equation}\label{Young3}
%			\begin{aligned}
%				|M|v^{\alpha - 1}|\nabla v|^{m + q} \leq J v^{\alpha - 2}|\nabla v|^{2m} + \frac{1}{p + 2} \left(J \cdot \frac{p + 2}{p + 1}\right)^{- p - 1} |M|^{p + 2} v^{\alpha + p}|\nabla v|^q,
%			\end{aligned}
%		\end{equation}
%		with $(\frac{p + 2}{p + 1} , p + 2)$, and
		\begin{equation}\label{Young4}
			\begin{aligned}
				|M| v^{\alpha + p}|\nabla v|^q \leq K v^{\alpha + 2p} + \frac{q}{m}\left(K\cdot \frac{m}{m - q}\right)^{- \frac{m - q}{q}}  |M|^{\frac{m}{q}} v^{\alpha + p - 1}|\nabla v|^m,
			\end{aligned}
		\end{equation}
		with $(\frac{m}{m - q}, \frac{m}{q})$.

Using \eqref{Young1} and \eqref{Young4}, there exists $0 < M_2 < M_3$ such that for any $-M_2 < M < 0$, the result \eqref{sub4_eq1} hold.

			\item $\frac{n}{2} + \frac{1}{4} \leq m < n$:
\\
In this case, we consider the condition \eqref{sec2_condition3}:
			\begin{align*}
				U - 1 - \frac{(n - 1)^2}{4n^2}\left[ \frac{nqU}{n(m - 1) + q}\right]^2 > 0.
			\end{align*}
			\begin{claim}\label{claim2}
				When $\frac{n}{2}+{1\over 4} \leq m < n$, we have
				\begin{align*}
					\Delta = 1 - \frac{(n - 1)^2}{n^2}\left[ \frac{nq}{n(m - 1) + q}\right]^2 > 0.
				\end{align*}
			\end{claim}
			\begin{proof}[Proof of claim \ref{claim2}]
			\begin{align*}
				\Delta &= 1 - (n - 1)^2\left[ \frac{q}{n(m - 1) + q}\right]^2\\
				&= \frac{(mn - n + q)^2 - (n - 1)^2q^2}{(mn - n + q)^2}\\
				&= \frac{-(n^2 - 2n)q^2 + 2(mn - n)q + (mn - n)^2}{(mn - n + q)^2}.
			\end{align*}
			Since $m - 1 < q < m - 1 + \frac{m}{n}$, we only need to check that
			\begin{numcases}{}
			\label{U1} -(n^2 - 2n)(m - 1)^2 + 2(mn - n)(m - 1) + (mn - n)^2 > 0,\\
			\label{U2} -(n^2 - 2n)\left( m - 1 + \frac{m}{n}\right)^2 + 2(mn - n)\left( m - 1 + \frac{m}{n}\right) + (mn - n)^2 > 0.	
			\end{numcases}
			In fact, we have
			\begin{align*}
				\eqref{U1} ~\Leftrightarrow~ & n(m - 1)\Bigg[-(n - 2)(m - 1) + 2(m - 1) + (mn - n) \Bigg] > 0\\
				\Leftrightarrow ~& 4n(m - 1)^2 > 0.\\
			\end{align*}
			On the other hand, we know
			\begin{equation}\label{inequa}
			\begin{aligned}
				\eqref{U2} \Leftrightarrow ~&-(n - 2)\left( mn - n + m\right)^2 + 2(mn - n)\left( mn - n + m\right) + n(mn - n)^2 > 0,\\
				\Leftrightarrow ~& 2m^2n^2+4n^2+2m^2+5m^2n-6mn(n+1) > 0.\\
				\end{aligned}
			\end{equation}
			Define
			\begin{align*}
				 f(m) &:= 2m^2n^2+4n^2+2m^2+5m^2n-6mn(n+1)\\
       &= m^2(2n^2+5n+2)-m(6n^2+6n)+4n^2.\\
			\end{align*}
          It is equivalent to prove
				\begin{align*}
      f(m) >0.\\
       \end{align*}
      Since $m \geq \frac{n}{2} + \frac{1}{4} > \frac{3n^2 + 3n}{2n^2+5n+2}$, we get that
      \begin{align*}
      	f(m) &\geq f\left( \frac{n}{2} + \frac{1}{4} \right)\\
      	&= \left( \frac{n}{2} + \frac{1}{4} \right)^2(2n^2+5n+2) - \left( \frac{n}{2} + \frac{1}{4} \right)(6n^2+6n)+4n^2\\
      	&= 0.5n^4 - 1.25 n^3 + 1.375n^2 - 0.6875 n + 0.125\\
      	&= 0.5(n - 0.5)^2(n^2 - 1.5n + 1).
      \end{align*}
So $f(m)>0$, i.e. \eqref{inequa} holds.
			\end{proof}
		\end{itemize}
So we know there exists $U_0 > 0$ such that condition \eqref{sec2_condition3} holds. At this time, we still choose the same $P, T$ as the first case, then \eqref{sec2_condition1} - \eqref{sec2_condition4} hold, but
\begin{align*}
\begin{cases}
		 \quad \alpha + p  - \frac{n(m - 1)\alpha U}{n(m - 1) + q} + \frac{(n - 1)^2}{2n^2}\Bigg[ {n\alpha\over n-1}- \frac{nmp}{n(m-1)+m} \Bigg]\cdot \frac{nqU}{n(m - 1) + q} > 0,\\	
		 \quad T+U-1 > 0.
	\end{cases}
\end{align*}
By Young's inequality,
		\begin{equation}\label{Young3}
			\begin{aligned}
				|M|v^{\alpha - 1}|\nabla v|^{m + q} \leq J v^{\alpha - 2}|\nabla v|^{2m} + \frac{1}{p + 2} \left(J \cdot \frac{p + 2}{p + 1}\right)^{- p - 1} |M|^{p + 2} v^{\alpha + p}|\nabla v|^q,
			\end{aligned}
		\end{equation}
		with $(\frac{p + 2}{p + 1} , p + 2)$, and
		\begin{equation*}
			\begin{aligned}
				|M| v^{\alpha + p}|\nabla v|^q \leq K v^{\alpha + 2p} + \frac{q}{m}\left(K\cdot \frac{m}{m - q}\right)^{- \frac{m - q}{q}}  |M|^{\frac{m}{q}} v^{\alpha + p - 1}|\nabla v|^m,
			\end{aligned}
		\end{equation*}
		with $(\frac{m}{m - q}, \frac{m}{q})$.
Therefore  there exists $0 < M_2 < M_3$ such that for any $-M_2 < M < 0$, the result \eqref{sub4_eq1} holds.
	\end{proof}

	\end{CJK}

\begin{thebibliography}{100}
{\small

\bibitem{Bidaut-Veron} M.F. Bidaut-V\'eron, Local and global behavior of solutions of quasilinear equations of
Emden-Fowler type, Arch. Rational Mech. Anal. 107 (1989), 293-324.

\bibitem{Bidaut-Veron0} M.F. Bidaut-V\'eron, Liouville results and asymptotics of solutions of a quasilinear elliptic equation with supercritical source gradient term, Adv. Nonlinear Stud. 21 (2021), 57-76.


\bibitem{Bidaut-Veron3}M.F. Bidaut-V\'eron, M. Garcia-Huidobro and L. V\'eron, Local and global properties of solutions
of quasilinear Hamilton-Jacobi equations. J. Funct. Anal. 267(9) (2014), 3294-3331.


\bibitem{VERON}M.F. Bidaut-V\'eron, M. Garcia-Huidobro and L. V\'eron, A priori estimates for elliptic equations with reaction terms involving the
    function and its gradient. Math. Ann. 378(1-2) (2020), 13-56.

\bibitem{VERON0}  M.F. Bidaut-V\'eron, M. Garcia-Huidobro and L. V\'eron, Radial solutions of scaling invariant nonlinear elliptic equations with mixed reaction terms, Discrete Contin. Dyn.Syst. 40 (2020), 933-982.

\bibitem{Bidaut-Veron1} M.F. Bidaut-V\'eron, L. V\'eron, Nonlinear elliptic equations on compact Riemannian manifolds and asymptotics of Emden equations, Invent. Math. 106 (1991), 489-539.

\bibitem{chen} W.X. Chen, C. Li, Classification of solutions of some nonlinear elliptic equations, Duke Math. J. 63 (1991), 615-622.

\bibitem{Chipot-Weissler} M. Chipot, F. Weissler, Some blow up results for nonlinear evolution equations with a gradient term. SIAM J. Math. Anal.
20 (1989), 886-907.

\bibitem{DMMS} L. Damascelli, S. Merch\'an, L. Montoro and B. Sciunzi, Radial symmetry and applications for a problem involving the $-\Delta_p(\cdot)$ operator and critical nonlinearity in $\mathbb R^n$, Adv.Math. 265 (2014), 313-335.

\bibitem{F-Y-Y} R. Filippucci, Y. Sun and Y. Zheng, A priori estimates and Liouville type results for quasilinear elliptic equations involving gradient terms, arXiv:2205.07484, (2022).

\bibitem{Gidas}B. Gidas, J. Spruck, Global and local behavior of positive solutions of nonlinear elliptic equations, Comm. Pure Appl. Math. 34 (1981), 525-598.

\bibitem{Guedda} M. Guedda, L.V\'eron, Local and global properties of solutions of quasilinear elliptic equations, J. Differential Equations, 76 (1988), 159-189.

\bibitem{P-L}P.L. Lions, Quelques remarques sur les probl\'es elliptiques quasilin\'eaires du second ordre. (French) [Some remarks on second order
    elliptic quasilinear problems] J. Analyse Math. 45 (1985), 234-254.

\bibitem{MWZ} X.N. Ma, W.Z. Wu and Q.Q. Zhang, Liouville theorem for elliptic equations involving the sum of the function and its gradient in $\mathbb R^n$, arXiv:2311.04641.

\bibitem{Ni0} W.M. Ni, J. Serrin, Non-existence theorems for quasilinear partial differential equations, Rend. Circ. Mat. Palermo, suppl. 8 (1985), 171-185.

\bibitem{Ni}W.M. Ni, J. Serrin, Existence and nonexistence theorems for ground states of quasilinear partial differential equations: The anomalous case, Accad. Naz. dei Lincei, 77 (1986), 231-257.

\bibitem{Ni1} W.M. Ni, J. Serrin, Nonexistence theorems for singular solutions of quasilinear partial
differential equations, Comm. Pure Appl. Math. (1986), 379-399.

\bibitem{Serrin}J. Serrin, Local behavior of solutions of quasilinear equations, Acta Math. 111 (1964), 247-302.


\bibitem{SZ}J. Serrin, H. Zou, Existence and non-existence results for ground states of quasilinear elliptic
equations. Arch. Rat. Mech. Anal. 121 (1992), 101-130.

\bibitem{Serrin-Zou} J. Serrin, H. Zou, Ground States of Degenerate Quasilinear Equations. Mathematics in Science and Engineering, 192 (1993), 287-305.

\bibitem{Serrin1}J. Serrin, H. Zou, Cauchy Liouville and universal boundedness theorems for quasilinear elliptic equations and inequalities, Acta Math. 189 (2002), 791-42.

\bibitem{Vetois}J. V\`etois, A priori estimates and application to the symmetry of solutions for critical p-Laplace equations, J. Differential Equations, 260 (2016), 149-161.

    }
\end{thebibliography}
\end{document}